\journal{Neurocomputing}
\begin{document}
\newtheorem{theorem}{Theorem}
\newtheorem{lemma}{Lemma}
\renewcommand{\proofname}{\textbf{Proof}}
\begin{frontmatter}



\title{A Generalized Spiking Locally Competitive Algorithm for Multiple Optimization Problems}


\author[label1]{Xuexing Du}
\author[label1]{Zhong-qi K. Tian}
\author[label1]{Songting Li\corref{cor1}}
\author[label1]{Douglas Zhou\corref{cor1}}

\affiliation[label1]{organization={School of Mathematical Sciences, MOE-LSC, and Institute of Natural Sciences, Shanghai Jiao Tong University},
            addressline={800 Dongchuan Rd},
            city={Shanghai},
            postcode={200240},
            state={Shanghai},
            country={China}}

\cortext[cor1]{To whom correspondence may be addressed. Email: songting@sjtu.edu.cn, zdz@sjtu.edu.cn.}




\begin{abstract}
We introduce a generalized Spiking Locally Competitive Algorithm (LCA) that is biologically plausible and exhibits adaptability to a large variety of neuron models and network connectivity structures. In addition, we provide theoretical evidence demonstrating the algorithm's convergence in optimization problems of signal recovery. Furthermore, our algorithm demonstrates superior performance over traditional optimization methods, such as FISTA, particularly by achieving faster early convergence in practical scenarios including signal denoising, seismic wave detection, and computed tomography reconstruction. Notably, our algorithm is compatible with neuromorphic chips, such as Loihi, facilitating efficient multitasking within the same chip architecture—a capability not present in existing algorithms. These advancements make our generalized Spiking LCA a promising solution for real-world applications, offering significant improvements in execution speed and flexibility for neuromorphic computing systems.

\end{abstract}




\begin{keyword}
generalized Spiking LCA \sep optimization problems \sep signal recovery.


\end{keyword}

\end{frontmatter}


\section{Introduction}
In diverse areas such as compressed sensing, Bayesian inference, and dictionary learning, the pursuit of sparse representation of signals is critical for enhancing information transfer, reducing complexity, and optimizing resource use \cite{de2019compressed, zhang2023physics, chai2022image, kuzin2019bayesian, aitchison2017or, mairal2009online}. These disciplines are often confronted with challenging optimization problems, propelling the advancement of efficient solutions. Traditional methods like gradient descent and greedy algorithms have been effective across a variety of optimization challenges \cite{beck2009fast, xiang2021fista, yi2022improved}. However, their efficiency diminishes in the context of large-scale problems, leading to significant computational demands and resource consumption. This limitation has prompted researchers to explore alternative strategies that can more effectively manage large-scale issues. 

An intriguing approach draws inspiration from the human brain, recognized for its exceptional energy efficiency and adaptability. Studies on the primary visual cortex indicate that sensory neurons can encode natural stimuli, such as visual images, with impressive efficiency through sparse coding \cite{field1994goal, olshausen1996emergence, rehn2007network,sachdev2012surround,zhu2013visual, barranca2019compressive, boutin2021sparse}. This has led to the development of neural network models aimed at solving optimization problems in a more energy-efficient manner \cite{barranca2014sparsity, shapero2014optimal, tang2017sparse, barranca2019role, barranca2021neural}. The Spiking Locally Competitive Algorithm (Spiking LCA), a prominent algorithm in unsupervised learning, stands out in this regard \cite{shapero2014optimal}. Yet, Spiking LCA's effectiveness is constrained by its rigidity. Firstly, its reliance on fully inhibitory connections between neurons restricts the scope of optimization problems it can effectively tackle, e.g., the measurement matrix is subject to stringent restrictions. Secondly, distinct neuron network architectures are required for different optimization problems, implying that when we implement the Spiking LCA on neuromorphic chips, network architecture modifications become essential when dealing with varying problems. Therefore, there is an urgent need for an algorithm that can seamlessly adapt to various optimization challenges within a single network framework.

This paper addresses the abovementioned challenges by presenting a new algorithm designed for constructing spiking neural networks, which supports both excitatory and inhibitory neuronal connections. Notably, our approach facilitates the handling of diverse optimization problems within a single network framework by modulating the external input currents to neurons. This adaptability offers significant engineering advantages, particularly the capability to execute multiple tasks within a single chip architecture. Our spiking neural network model is grounded in biologically plausible neuron models, extending from the simple Leaky Integrate-and-Fire (LIF) model to more complex Hodgkin-Huxley type models. We also provide theoretical evidence that our network's firing rates converge to optimal solutions for a variety of  optimization problems, such as LASSO and Elastic-Net \cite{zhang2020lasso, zou2005regularization, he2022granular}. A notable strength of our proposed model is its considerably faster early convergence when compared to leading optimization algorithms like FISTA \cite{beck2009fast}. This enhanced convergence speed enables our model to reach reliable solutions more swiftly, aligning well with practical scenarios where energy efficiency and time are of the essence \cite{davies2018loihi, davies2021advancing, fair2019sparse, watkins2020using, wu2023forward}.

The rest of this paper is structured as follows: Section 2 presents the optimization problems central to our research. Section 3 provides an overview of the generalized Spiking LCA, along with theoretical demonstrations of its convergence across various optimization scenarios. Section 4 compares our algorithm with a traditional widely used optimization algorithm FISTA \cite{beck2009fast} in solving practical problems within compressed sensing and signal processing domains. Lastly, we broaden the application of our algorithm to encompass additional types of real-world optimization problems.

\subsection{Related Works}

The Spiking LCA was primarily developed to tackle the constrained LASSO optimization problem \cite{shapero2014optimal}. The algorithm's ability to converge to the precise solution of the constrained LASSO problem was theoretically proven in Ref.~\cite{tang2017sparse}. Moreover, a rigorous analysis of the convergence rate, which enhanced our understanding of the computational capabilities of SNNs, was provided in Ref.~\cite{chou2018algorithmic}. Subsequent research has effectively extended the Spiking LCA on neuromorphic hardware to address practical problems \cite{davies2018loihi, davies2021advancing, fair2019sparse, watkins2020using, henke2022apples, wu2023forward}. However, in these studies, the hardware was limited to solving a single type of optimization problem at a time, as addressing different optimization problems required alterations to the chip architecture \cite{chavez2023spiking, zins2023neuromorphic, chavez2022neuromorphic}. Additionally, the neuron model employed in the Spiking LCA was based on a capacitor circuit, which contrasts with the resistor-capacitor (RC) circuit models that more accurately represent real neurons.
Our work addresses these issues by developing a generalized Spiking LCA. This algorithm is  designed to support efficient multitasking within the same chip architecture, while ensuring compatibility with diverse neuron models and diverse connectivity patterns among neurons.

\section{Sparse Approximation and Recovery Problems}

Sparse approximation and recovery problems are fundamental problems in signal processing and machine learning, aiming to exploit signals' sparsity for various applications. These problems have attracted significant attention in recent years due to their wide range of applications, including image processing, seismic wave detection, and feature selection \cite{kougioumtzoglou2020sparse, wang2020structured, wang2022interpolation, xu2022sparse}.

Sparse approximation primarily represents a given signal using a few non-zero coefficients from an overcomplete dictionary. It seeks the optimal sparse linear combination of atoms (basis functions) in the dictionary that approximates the given signal. In sparse approximation problems, we assume that the signal comprises structured components and unstructured additive noise, as expressed in Eq. \ref{linear_genrative},

\begin{equation}
    \label{linear_genrative}
    \mathbf{s}
 = \Phi \mathbf{a} + \mathbf{\epsilon},
\end{equation}
where a vector input $ \mathbf{s} \in \mathbb{R}^M$ corresponds to an input signal from a particular class of signals. It is a linear combination of an overcomplete dictionary $\Phi = [\phi_1, \phi_2, \cdots, \phi_N]$ (a dictionary with more atoms than the input signal dimension) using coefficients $\mathbf{a} \in \mathbb{R}^N$. Furthermore, $
\mathbf{\epsilon}$ represents additive Gaussian white noise.

While the sparse approximation problem focuses on finding the optimal sparse representation $\mathbf{a}$ for a given signal $\mathbf{s}$ using an overcomplete dictionary, sparse recovery aims to reconstruct a sparse signal from a set of limited, noisy, and underdetermined measurements. This problem frequently arises in compressed sensing, which aims to accurately recover the original sparse signal from a smaller number of linear measurements than those required by the Nyquist-Shannon sampling theorem. The primary concept behind compressed sensing is to exploit the inherent sparsity or compressibility in a specific transformation domain, such as the wavelet or Fourier domain \cite{donoho2006compressed, li2021double, ueda2021compressed}. Mathematically, compressed sensing is described as follows. Consider a signal $\mathbf{x} \in \mathbb{R}^N$, which is $K$-sparse in a transformation domain $\Psi$, i.e., only $K$ coefficients in the transformed domain are non-zero ($K \ll N$). This can be expressed as $\mathbf{x} = \Psi \mathbf{a}$, where $\mathbf{a} \in \mathbb{R}^N$ is the $K$-sparse coefficient vector. The signal $\mathbf{x}$ can be measured using an $M \times N$ measurement matrix $\Phi$, with $M \ll N$
, resulting in a compressed measurement vector $\mathbf{s} \in \mathbb{R}^M$, satisfying: $\mathbf{s} = \Phi \mathbf{x} = \Phi \Psi \mathbf{a}$. Since $\Phi$ and $\Psi$ are incoherent, i.e., their columns are not correlated, the product of the two matrices $A = \Phi \Psi$ can be treated as a new sensing matrix. The goal is to recover the sparse coefficient vector $\mathbf{a}$ from the compressed measurement vector $\mathbf{s}$.

The algorithms used to solve sparse recovery and sparse approximation problems are often the same. In this paper, $A$ represents either $\Phi$ or $\Phi \Psi$ collectively. We aim to solve the underdetermined system $\mathbf{s} = A \mathbf{a} + \epsilon$, with the prior knowledge that only a few entries in $\mathbf{a}$ are non-zero. This problem is mathematically formulated as follows:

\begin{equation}
\label{origianl_function}
\min _{\mathbf{a}} E=\frac{1}{2}\|\mathbf{s}-A \mathbf{a}\|_2^2+\lambda \widetilde{C}(\mathbf{a}),
\end{equation}
where the objective function in Eq. \ref{origianl_function} comprises two terms. The first term measures the mean squared reconstruction error (MSR) of signals while the second term $\widetilde{C}(\mathbf{a})$ imposes a penalty that promotes sparsity in the coefficient vector. The parameter $\lambda$ balances the trade-off between data fidelity and sparsity. However, directly optimizing this objective function with the $\ell^{0}$-norm, $\widetilde{C}(\mathbf{a})=\|\mathbf{a}\|_0$, as the sparsity-inducing function is computationally intractable, i.e., an NP-hard problem. Therefore, alternative surrogate sparsity-inducing functions are commonly used, with the $\ell^{1}$-norm, $\widetilde{C}(\mathbf{a})=\|\mathbf{a}\|_1$, a convex function that encourages sparsity, being a popular solution. The resulting optimization problem, known as LASSO \cite{zhang2020lasso}, is expressed as:

\begin{equation}
\label{LASSO}
\min _{\mathbf{a}} E=\frac{1}{2}\|\mathbf{s}-A \mathbf{a}\|_2^2+\lambda  \|\mathbf{a}\|_1.
\end{equation}
In some problems, we have additional requirement on the variable $\mathbf{a}$ to be non-negative, i.e., $\mathbf{a} \geq 0$. This variation is referred to as the constrained LASSO problem:
\begin{equation}
\label{CLASSO}
\min _{\mathbf{a} \ge 0} E=\frac{1}{2}\|\mathbf{s}-A \mathbf{a}\|_2^2+\lambda  \|\mathbf{a}\|_1.
\end{equation}


Although there have been significant advancements in solving the LASSO problem, such as ISTA, FISTA, and LISTA \cite{beck2009fast, gregor2010learning}, its computational complexity is a major challenge for real-time digital signal processing applications that deal with large-scale signals. Thus, the computational demands of the LASSO problem limit its practical applicability, where rapid and low-power reconstruction algorithms are crucial. Therefore, there is a growing demand for efficient algorithms and hardware architectures that enable real-time processing of large-scale signals or data. Indeed, developing such technologies is vital for advancing signal processing and machine learning, and with real-world  applications.


\section{The generalized Spiking LCA}

\subsection{Review of LCA and Spiking LCA}

To elucidate our approach, we first provide an overview of the Locally Competitive Algorithm (LCA) and its spiking variant, the Spiking LCA. The LCA, often termed Analog LCA, is known for its robust convergence characteristics and capability to tackle large-scale challenges \cite{rozell2008sparse}. The LCA's architecture comprises an interconnected neural network encompassing $N$ neurons. The LCA can solve the LASSO problem represented in Eq. \ref{LASSO} by evolving the dynamics of the neuron's membrane potential described by  
\begin{equation}
\label{lca}
\begin{aligned}
&\dot{\mathbf{u}}(t) =\frac{1}{\tau} [\mathbf{b}-\mathbf{u}(t)-\left(A^{T} A-I\right) \mathbf{a}(t)], \\
&\mathbf{a}(t) =T_{\pm \lambda}(\mathbf{u}(t)), \quad T_{\pm \lambda}(\mathbf{u}) = T_{\lambda}(\mathbf{u}) + T_{\lambda}(-\mathbf{u})\\
&T_{\lambda}(\mathbf{u}(t)) = \max (|\mathbf{u}|-\lambda, 0) \cdot \operatorname{sgn}(\mathbf{u}).
\end{aligned}
\end{equation}
In the above neuronal dynamics model, each neuron receives a constant input $\mathbf{b} \in \mathbb{R}^{M
}$, which is detemined as $\mathbf{b}=A^{T} \mathbf{s}$. Here, the matrix $A$ and vector $\mathbf{s}$ are as defined in the original Lasso problem (Eq.~\ref{LASSO}). When the firing threshold of the neuron is reached, the neuron dispatches inhibitory signals to its counterparts. $W = I - A^{T} A$ characterizes this inter-neuronal interaction strength, and $\tau$ denotes the time constant of neuronal response. The neurons communicate through activations $\mathbf{a}(t)$, akin to spike rates. The function $T_{\lambda}(\cdot)$  enforces output sparsity via a soft-thresholding mechanism. Recent empirical evidence suggests that LCA demonstrates local asymptotic stability, ensuring resilience against external disturbances and convergence to equilibrium states over time \cite{balavoine2012convergence}. Consequently, for specific inputs, the system converges to a unique and stable solution consistent with the global minimum of a LASSO optimization problem \cite{balavoine2012convergence}.

Despite the efficacy of LCA in solving large-scale challenges, it has certain limitations. For example, its dependence on continuous-time dynamics may enhance computational and energy costs, especially on traditional computing platforms. Therefore, the Spiking LCA is conceived to circumvent these challenges and harness the potential of neuromorphic hardware \cite{davies2018loihi,zhang2022spiking}. The Spiking LCA harnesses the energy-efficient properties inherent to spiking neural encoding by integrating spike-driven neuronal dynamics into the LCA paradigm. This integration significantly enhances power efficiency by capitalizing on the strengths of spiking neural networks (SNNs)~\cite{tang2017sparse,davies2018loihi,fair2019sparse,zhang2022spiking}. Note that the Spiking LCA addresses the constrained LASSO specifically because of the inherently non-negative firing rates of neurons. 

To implement the Spiking LCA in an SNN, each of the $N$ neurons receives an somatic input current $\mu_i(t)$ over time $t$ to change its membrane potential $v_i(t)$.  The membrane potential accumulates according to the equation \begin{equation}
\label{eqn:slca_v}
v_i(t) = \int_0^t (\mu_i - \lambda)dt,
\end{equation}
while it remains below the firing threshold $v_{th}$, and $\lambda \geq 0$ represents a predefined bias current. This bias current is set as the constant $\lambda$ specified in Eq.~\ref{CLASSO}. When $v_i(t)$ reaches $v_{th}$ at  time $t_{i, sp}$, neuron $i$ is said to fire a spike, and $v_i(t)$ is set to the value of the reset voltage $v_{reset}$. At the same time, inhibitory currents are injected into all other neurons connected with neuron $i$. In the  numerical simulation, the non-dimensional values $v_{reset}=0$,$v_{th}=1$ are used. 

The somatic input current $\mu_i(t)$ consists of a constant background input current $b_i = A^T_i\mathbf{s}$ and synaptic input currents from other neurons, which is governed by
\begin{equation}
\label{mu_diff}
\dot{\mu}_{i}(t) =b_{i}-\mu_{i}(t)-\sum_{j \neq i} w_{i j} \sigma_{j}(t).
\end{equation}
where $w_{ij} = A_i^TA_j$ is the synaptic weight from neuron $j$ to neuron $i$, $\sigma_j(t) = \sum_k \delta(t - t_{j, k})$ is the sum of Dirac delta
functions, and $t_{j,k}$ corresponds to the $k^{th}$ spike time of the $j^{th}$ neuron. And $\alpha(t) = e^{-t}$ for $t \geq 0$ and zero otherwise, implying that the synaptic current is modulated by a weighted exponential decay function when an input is received, consistent with experimental observation. Eqs. \ref{eqn:slca_v}-\ref{mu_diff}, along with the definition of the spike trains $\sigma_i(t)$, describe the Spiking LCA. 

To demonstrate the algorithm's convergence, we introduce two variables, the spike rate $a_i(t)$ and the average somatic input current $u_i(t)$, which are defined below:

\begin{equation}
\label{au_def}
\begin{aligned}
a_i(t) &=\frac{1}{t-t_0} \int_{t_0}^t \sigma_i(s) d s,\\
u_i(t) &= \frac{1}{t-t_0} \int_{t_0}^t \mu_i(s) d s.
\end{aligned}
\end{equation}
Using definitions of $u_i$, $\mu_i$ and $a_i$ from Eqs. \ref{mu_diff}-\ref{au_def}, we can derive:
\begin{equation}
\label{Spiking LCA}
\dot{u}_i(t)=b_i(t)-u_i(t)-\sum_{j \neq i} w_{i j} a_j(t)-\frac{[u_i(t)-u_i\left(t_0\right)]}{t-t_0},
\end{equation}
which is the spiking analog of the original LCA dynamics (Eq. \ref{lca}). In the Spiking LCA, the potential accumulation is regulated by Eq. \ref{eqn:slca_v}. Consequently, the relationship between $u_i(t)$ and $a_i(t)$ satisfies  $T_\lambda\left(u_i(t)\right)-a_i(t) \rightarrow 0$ as $t \rightarrow \infty$, where $T_\lambda(\cdot)$ is described by

\begin{equation}
\label{Threshold}
T_{\lambda}(u(t))= \begin{cases}u(t)-\lambda & \text { if } u(t)>\lambda \\ 0 & \text { else.}\end{cases}
\end{equation}
Strict inhibitory connections ensure the average soma current remains within bounds. As $t \rightarrow \infty$, $\left(u_{i}(t)-u_{i}\left(t_{0}\right)\right) /\left(t-t_{0}\right) \rightarrow 0$. This indicates that the system tends towards the same limit as observed in LCA, which is equivalently the solution to the constrained LASSO problem Eq. \ref{CLASSO} \cite{tang2017sparse}.

\subsection{The generalized Spiking LCA}
The spiking LCA excludes essential features such as the leaky property and refractory period of a biological neuron. Furthermore, it considers a linear input-output curve, contrasting the non-linear dynamics observed in real biological neurons. Regarding the networks' architecture, existing LCA algorithms focus on inhibitory connections, limiting their application to a particular set of optimization problems. Therefore, to generalize the Spiking LCA to integrate a wide range of more biologically plausible neuron models in a unified framework \cite{tang2017sparse,davies2018loihi,fair2019sparse}, we now develop a generalized Spiking LCA.  


\begin{figure*}[!t]
\centering
\subfloat[]{\includegraphics[height=1.7in]{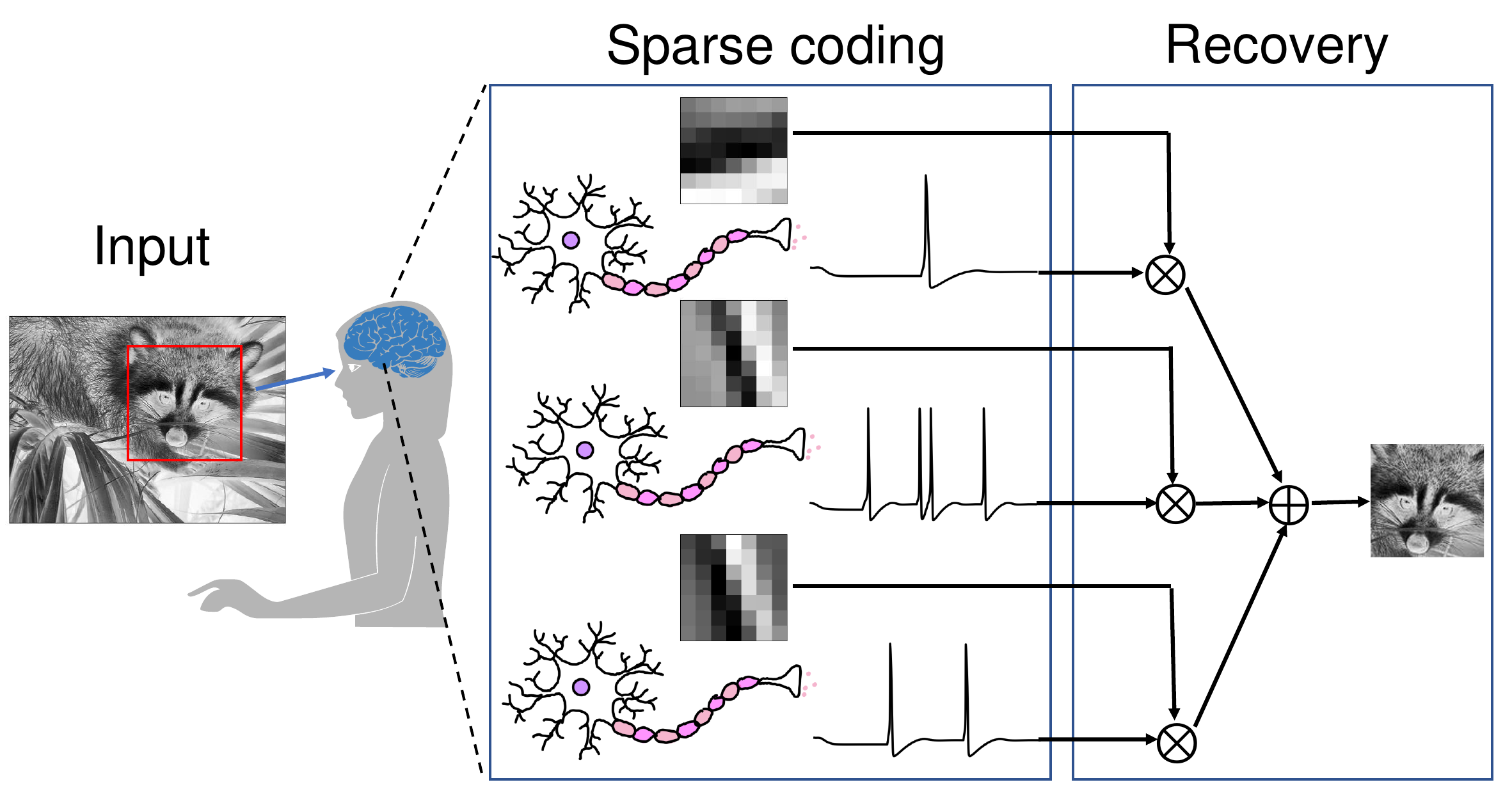}}
\hfil
\subfloat[]{\includegraphics[height=1.7in]{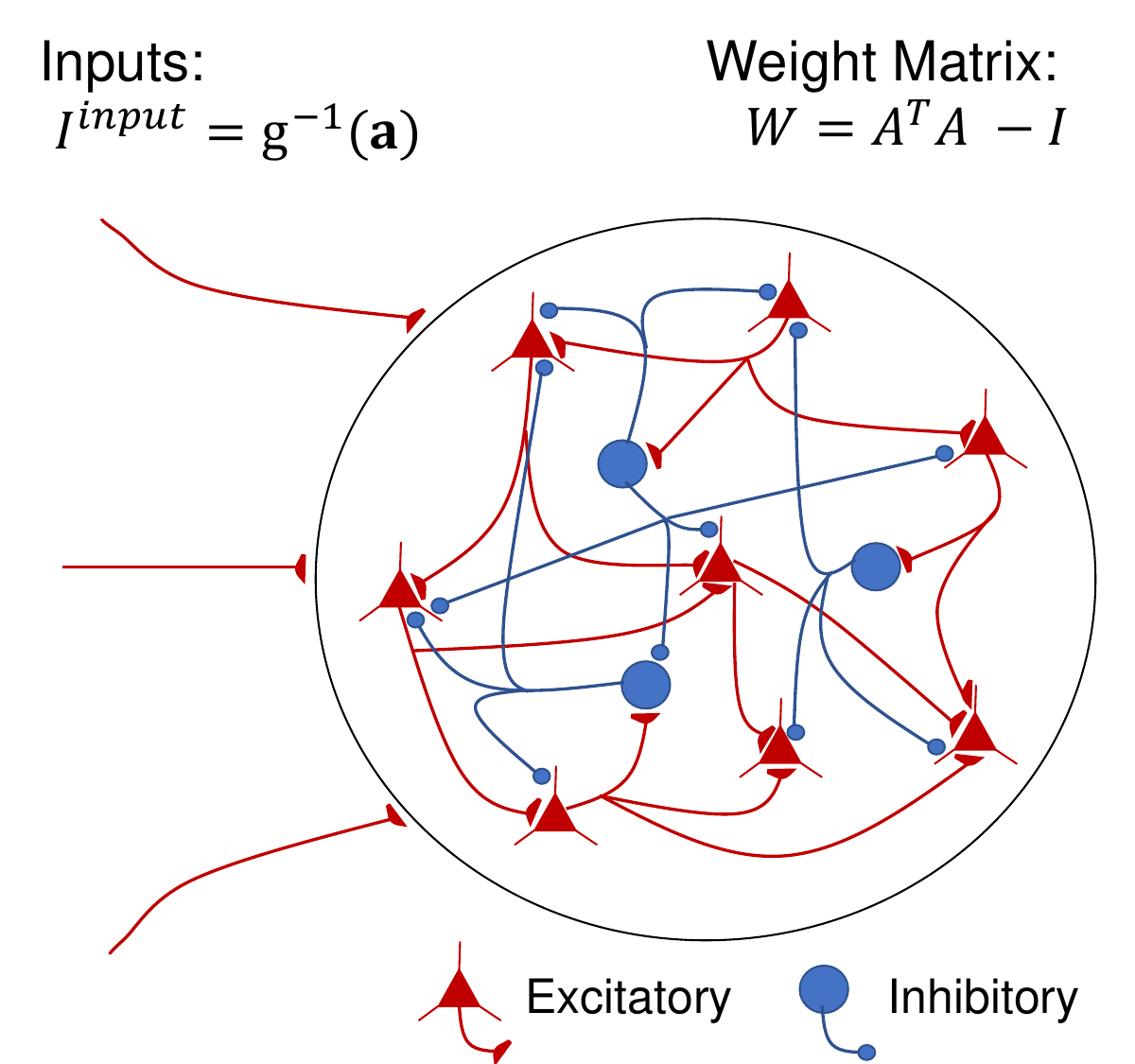}}
  \caption{The generalized Spiking LCA inspired by the visual cortex system. (a) Sparse coding is a technique used to simulate the sparse neural activity observed in the primary visual cortex. The input signal is reconstructed by computing a weighted sum of the receptive fields of model neurons, representing the specific regions of the input space that each neuron responds to. This approach allows for efficient and selective visual information processing, similar to the brain's.  (b) In the generalized Spiking LCA, each neuron receives an external input, $I^{input}=g^{-1}(\mathbf{a})$, as well as recurrent input from neighboring neurons.}
  \label{fig:concept}
\end{figure*}

Fig.~\ref{fig:concept} illustrates the sparse coding idea and the core architecture of the generalized Spiking LCA model. This model involves a network comprising $N$ interconnected neurons linked to all others through a current-based point neuron mechanism. The neuronal dynamics is governed by:

\begin{equation}
\label{lif}
\begin{aligned}
& c \frac{d v_{i}}{d t} = I^{ion}_{i}+I^{input}_{i}(t), \quad i=1, \ldots, N \\[10pt]
&\text{if } v_i\left(t\right) > v_{th} \quad  v_i\left(t\right) =v_{reset} \quad  t \in (t_{sp}, t_{sp} + t_{ref}),
\end{aligned}
\end{equation}
where $\mathrm{c}$ represents the neuron's membrane capacitance, $I^{ion}_{i}(t)$ is the ionic current in the neruon, and $I^{input}_{i}(t)$ refers to the injected current that depends on the recurrent inputs and the external constant inputs, which will be determined below. A neuron's membrane potential governs the generation of a spike train, which accumulates according to Eq. \ref{lif}. The corresponding neuron generates a spike when the membrane potential reaches the firing threshold $v_{th}$ at a specific time $t=t_{sp}$. This spike either inhibits or excites other neurons and resets its potential to the resting potential $v_{reset}$ during the refractory period. 

Following the spiking LCA, the average soma current $u_i(t)$ is required to follow
\begin{equation}
\label{gen Spiking LCA}
\dot{u}_i(t)=b_i(t)-u_i(t)-\sum_{j \neq i} w_{i j} a_j(t)-\frac{[u_i(t)-u_i\left(t_0\right)]}{t-t_0}.
\end{equation}
Note that, for different forms of ionic current $I^{ion}_{i}$, the relationship between firing rate  $a_i(t)$ and the input current $u_i(t)$ may not necessarily satisfy $a_i(t)=T_\lambda\left
(u_i(t)\right)$ as that in the classic Spiking LCA model, where $T_\lambda(u)$ is defined in Eq. $\ref{Threshold}$. As this relation is crucial to prove the convergence of the spiking LCA that solves the constrained Lasso problem, our generalized Spiking LCA requires further design on the input current $I_{i}^{\text{input}}$ to make the relation $a_i(t)=T_\lambda\left
(u_i(t)\right)$ hold. 


To design the input current, we next take the leaky integrate-and-fire model as an example, i.e., $I^{ion}_{i}=-g_{L}\left(v_{i}-v_{reset}\right)$, where $g_L$ is the leaky conductance, $v_{reset}$ is the reset potential after a spike. We can analytically solve the model and derive both the gain function $a=g(u)$ and its inverse function, which are depicted in Eq. \ref{gain_function}.

\begin{equation}
\label{gain_function}
\begin{aligned}
a = g(u) &=\left[t_{ref}- \frac{c}{g_L} \log \left(1-\frac{g_Lv_{th}}{u}\right)\right]^{-1}, u \ge g_L v_{th} \quad \\[10pt]
 \quad g^{-1}(a) &=\dfrac{g_Lv_{t h}}{1-\exp(g_L\left(t_{r e f}-\frac{1}{a}\right)/c) } .
\end{aligned}
\end{equation}
We compute the average soma current, $u_i(t)$, for each neuron at every time step based on Eq. \ref{gen Spiking LCA}. Using this current, we then calculate the input current $I_{i}^{\text{input}} = g^{-1}(T_{\lambda}(u_{i}))$ and apply it to each neuron in the subsequent time step. Accordingly, we then ensure that the output firing rate $a_{i}$ now satisfies the condition $a_{i}=T_{\lambda}\left(u_{i}\right)$. This holds because for the $i^{th}$ neuron,
\begin{equation}
    a_{i}= g(I_{i}^{\text{input}}) = g(g^{-1}(T_{\lambda}(u_{i}))) =T_{\lambda}\left(u_{i}\right).
\end{equation}
The above procedure can be generalized to a large variety of neuron models beyond the leaky integrate-and-fire model, and correspondingly we derive the following theorem for the convergence of our generalized Spiking LCA.



\begin{theorem}
    If the gain curve of the neuron model in the spiking neural network is continuous (not limited to the LIF model), then by applying an external input current $I_{i}^{\text {input }}=g^{-1}\left(T_{\lambda}\left(u_{i}\right)\right)$, as time approaches infinity, the firing rate 
$\mathbf{a}$ is equivalent to the output of LCA and converges to the optimal solution of Eq.~\ref{CLASSO}.
\end{theorem}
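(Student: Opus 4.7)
The plan is to reduce the convergence statement for the generalized Spiking LCA to the already-established convergence of the analog LCA, using the designed input current as the bridge. First I would verify that the prescription $I_{i}^{\text{input}} = g^{-1}(T_{\lambda}(u_{i}))$ is well defined: continuity (and monotonicity above rheobase) of the gain curve $g$ guarantees that $g^{-1}$ exists on the range of $g$, and since $T_{\lambda}(u_{i}) \geq 0$ lies in that range (with $g^{-1}(0)$ interpreted as any sub-rheobase drive producing no spikes), the injected current is always admissible. By construction one then recovers the key identity $a_{i}(t) = g(I_{i}^{\text{input}}) = T_{\lambda}(u_{i}(t))$ in the long-time averaged sense, which is the analogue of the soft-thresholding relation that underlies the classical Spiking LCA.

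Next I would pass to the long-time limit in the averaged soma-current equation (Eq.~\ref{gen Spiking LCA}). The crucial step is to show that the transient term $[u_{i}(t) - u_{i}(t_{0})]/(t - t_{0})$ vanishes as $t \to \infty$, which reduces to proving an a priori bound on $u_{i}(t)$ uniform in $t$. I would argue this via a Lyapunov approach: substituting $a_{i} = T_{\lambda}(u_{i})$ into Eq.~\ref{gen Spiking LCA} recovers (once the transient term is absorbed) a gradient-like system for the constrained LASSO energy $E$ in Eq.~\ref{CLASSO}, whose sub-level sets are bounded because $A^{T}A$ is positive semidefinite and the $\ell^{1}$ penalty is coercive on the non-negative orthant. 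The energy therefore acts as a Lyapunov function, bounding trajectories and forcing the $1/(t-t_{0})$ correction to decay.

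Once the transient term is eliminated, Eq.~\ref{gen Spiking LCA} becomes $\dot{u}_{i}(t) = b_{i} - u_{i}(t) - \sum_{j \neq i} w_{ij} T_{\lambda}(u_{j}(t))$, which is exactly the analog LCA dynamics of Eq.~\ref{lca} with the non-negative thresholding $T_{\lambda}$. The remaining task is to appeal to the convergence theory for the analog LCA \cite{balavoine2012convergence}: the equilibria of this system coincide with the KKT points of the constrained LASSO (Eq.~\ref{CLASSO}), and because that program is convex, the unique equilibrium is the global minimizer. Combined with $a_{i} = T_{\lambda}(u_{i})$, this yields $\mathbf{a}(t) \to \mathbf{a}^{\star}$.

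The main obstacle I anticipate is the boundedness argument needed to kill the transient $[u_{i}(t) - u_{i}(t_{0})]/(t-t_{0})$ term. The classical Spiking LCA exploits strictly inhibitory coupling to bound $u_{i}$ by elementary monotonicity, but the generalized setting permits both excitatory and inhibitory weights, so that shortcut is unavailable. The replacement argument must rest on the convex, dissipative structure of the underlying LASSO energy rather than on sign constraints of the weights, and making this Lyapunov reduction rigorous in the spiking regime—where $a_{i}(t) = T_{\lambda}(u_{i}(t))$ holds only after time-averaging—is where the real technical work lies.
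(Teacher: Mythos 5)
Your overall architecture matches the paper's: establish $a_i = T_\lambda(u_i)$ via the designed input current, show the transient term $[u_i(t)-u_i(t_0)]/(t-t_0)$ vanishes by proving an a priori bound on the currents, and then identify the limit with the analog LCA fixed point and hence with the minimizer of Eq.~\ref{CLASSO}. The gap is in the middle step. Your boundedness argument is circular as stated: you propose to bound trajectories by treating the LASSO energy as a Lyapunov function for the system obtained ``once the transient term is absorbed,'' but absorbing that term is precisely what the a priori bound is needed to justify. Moreover, the energy is coercive in $\mathbf{a}$, not in $\mathbf{u}$, and the quantity that actually has to be controlled in the spiking regime is the instantaneous soma current $\mu_i(t)$, driven by delta-function spike trains; the averaged relation $a_i = T_\lambda(u_i)$ is only available after the limit you are trying to take. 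So the Lyapunov reduction cannot be closed in the order you present it.

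The paper's replacement for the ``strictly inhibitory'' shortcut is far more elementary, and it is the one idea your proposal is missing: because the neuron models are biologically realistic (refractory period, bounded gain curve), every neuron's firing rate is intrinsically bounded, so interspike intervals satisfy $t_{i,k+1}-t_{i,k}\ge 1/R$ for some $R>0$. This yields $(\alpha * \sigma_j)(t) \le \sum_{l=0}^{\infty} e^{-l/R} < \infty$ and hence $|\mu_i(t)| \le B + nC\sum_{l=0}^{\infty} e^{-l/R}$ with $B=\max_j|b_j|$ and $C=\max_{i,j}|w_{ij}|$, entirely independently of the signs of the weights. Boundedness of $u_i(t)$ follows by averaging, Bolzano--Weierstrass supplies a convergent subsequence, the transient term in Eq.~\ref{gen Spiking LCA} dies, and the identification with the LCA limit then proceeds essentially as you describe. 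If you replace your Lyapunov step with this bounded-firing-rate argument, the rest of your outline is sound.
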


\begin{proof}
  For any neuron model with a continuous gain function  $g(\cdot)$, the average current dynamics of the $i^{th}$ neuron in the spiking neural network established based on this model satisfies 

\begin{equation}
\label{Theorem_1}
\begin{aligned}
\dot{u}_i(t) &=b_i(t)-u_i(t)-\sum_{j \neq i} w_{i j} a_j(t)-\frac{[u_i(t)-u_i\left(t_0\right)]}{t-t_0}, \\
a_{i} &=g\left(I_{i}^{\text {input }}\right).
\end{aligned}
\end{equation}
To prove the convergence of this system, we first introduce two lemmas:

\begin{lemma}
    If $g(\cdot) = T_\lambda(\cdot)$, applying any additional current at each step is unnecessary. As time approaches infinity, the firing rate $\mathbf{a}$ is equivalent to the output of LCA and converges to the optimal solution of Eq.~\ref{CLASSO}.
\end{lemma}
\begin{proof}
    see Ref.~\cite{shapero2014optimal} for details.
\end{proof}

\begin{lemma}
    There exists an upper bound $B_{+}$ and a lower bound $B_{-}$ such that $\mu_i(t), u_i(t) \in\left[B_{-}, B_{+}\right], \forall i, t \geq 0$.
\end{lemma}
\begin{proof}
In terms of network connections, our model distinguishes itself from previous works, which solely permitted inhibitory connections to maintain bounded soma current magnitudes and the corresponding average potentials. By incorporating realistic neuron models into our approach, the firing rate of the neurons is inherently limited, precluding it from becoming infinitely large. Consequently, we establish a lower bound and an $R>0$ such that $t_{i, k+1}-t_{i, k} \geq 1 / R$ for all $i=1,2, \ldots, n$, and $k \geq 0$, whenever two spike times are present. This insight confirms that the soma currents in our model are bounded both above and below. We define $C=\max_{i, j}\left|w_{i, j}\right|$ and $B=\max_j\left|b_j\right|$, acknowledging that the inner product of features and biases is finite. Employing the fact that $\left(\alpha * \sigma_j\right)(t) \leq \sum_{l=0}^{\infty} e^{-\frac{l}{R}}<\infty$, we demonstrate the following:

\begin{equation}
\begin{aligned}
\left\|\mu_i(t)\right\| & =\left\|b_i-\sum_{j \neq i} w_{i j}\left(\alpha * \sigma_j\right)(t)\right\| \\
& \leq
\left\|\left|b_i\right|+\sum_{
j \neq i}\left|w_{i j}\right|\left(\alpha * \sigma_j\right)(t)\right\| \\
& \leq\left\|\max _j\left|b_j\right|+\sum_{j \neq i}\left|w_{i j}\right|\left(\alpha * \sigma_j\right)(t)\right\| \\
& \leq\left\|B+n C\left
(\alpha * \sigma_j\right)(t)\right\| \\
& \leq\left\|B+n C \sum_{l=0}^{\infty} e^{-\frac{l}{R}}\right\|<\infty.
\end{aligned}
\end{equation} 
Implying the soma currents are bounded from above and below.
\end{proof}

 Hence, we adopt the proof of Ref. \cite{tang2017sparse} and state $u(t)=\left[u_1(t), u_2(t), \ldots, u_N(t)\right]^T$ has at least one limit point $u^* \in \mathbb{R}^N$ such that $u\left(t_k\right) \rightarrow u^*$ as the sequence of $t_k  \rightarrow \infty$ when $k \rightarrow \infty$ from the Bolzano-Weirstrass theorem. This implies:

\begin{equation}
\label{aike}
\lim _{t \rightarrow \infty} \dot{u}_i(t)=\lim _{t \rightarrow \infty} \frac{\mu_i-u_i}{t-t_0}=0.
\end{equation}
For other neuron models, we evaluate the average soma current $u_i(t)$ for each neuron at each iteration. We then calculate the input current $I_{i}^{\text {input }}=g^{-1}\left(T_{\lambda}\left(u_{i}\right)\right)$ using this current. Subsequently, we determine the output firing rate $a_{i}$ as $a_{i}=T_{\lambda}\left(u_{i}\right)$ by applying the activation function $g(\cdot)$ to input current $I_{i}^{\text {input }}$. This sequence of steps ensures that our spiking neural network converges to the solution of the constrained LASSO problem. The dynamics of $u_i(t)$ can be expressed as follows:

\begin{equation}
\label{Vars_spiking_LCA}
\begin{aligned}
\dot{u}_{i}(t) &=b_{i}-u_{i}(t)-\sum_{j \neq i} w_{i j} a_{j}(t), 
\\
a_{i} &=g\left(I_{i}^{\text {input }}\right)=T_{\lambda}\left(u_{i}\right) \quad t \rightarrow \infty.
\end{aligned}
\end{equation}
Hence $T_\lambda \left(u\left(t_k\right)\right) \rightarrow T_\lambda\left(u^*\right)=a^*$, we can conclude the system converges to the same limit found in LCA.  With the above results, we complete the proof.
\end{proof}

In the Appendix,   four commonly used neruonal models are introduced. Note that analytical expressions for the gain function of these models are generally infeasible. However, we can approximate the gain function numerically, and subsequently incorporate them into our algorithm and perform numerical experiments below.

\section{Numerical experiments}

This section presents a series of numerical experiments to demonstrate the effectiveness of the generalized Spiking LCA and its convergence behavior.  Our tests encompass both synthetic and real datasets. For the synthetic data, we aim to confirm the solution equivalence between the generalized Spiking LCA and various optimization problems. Subsequently, we juxtapose the performance of our algorithm with different penalty functions, highlighting the superiority of our algorithm regarding power consumption and processing time. The dictionary entries  $\Phi$ are sampled randomly from a standard Gaussian distribution, represented by $\mathcal{N}(0,1)$
. Additionally, we explore the algorithm's efficacy in several practical applications, such as sparse signal and CT image reconstruction. All numerical experiments were conducted the Brainpy neural engineering simulation platform \cite{wang2023brainpy} on a server powered by an A100 GPU platform. Recent work indicates that our algorithm can be implemented on the neuromorphic chip Loihi \cite{davies2018loihi,zhang2022spiking}.

\subsection{Signal recovery}
\label
{sec:signal_recovery}
\begin{figure*}[!t]
\centering
\subfloat[]{\includegraphics[width=2.6in]{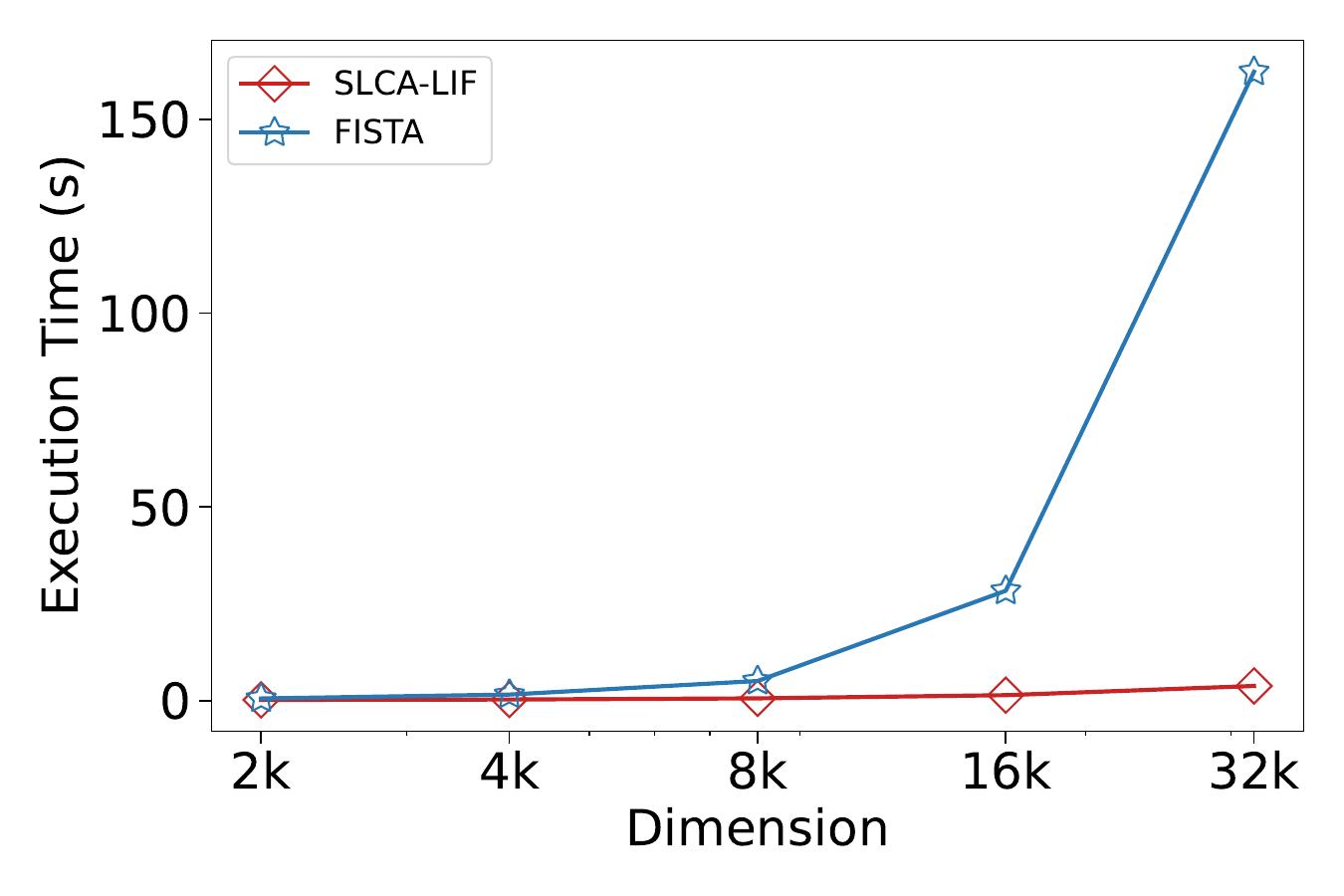}}
\hfil
\subfloat[]{\includegraphics
[width=2.6in]{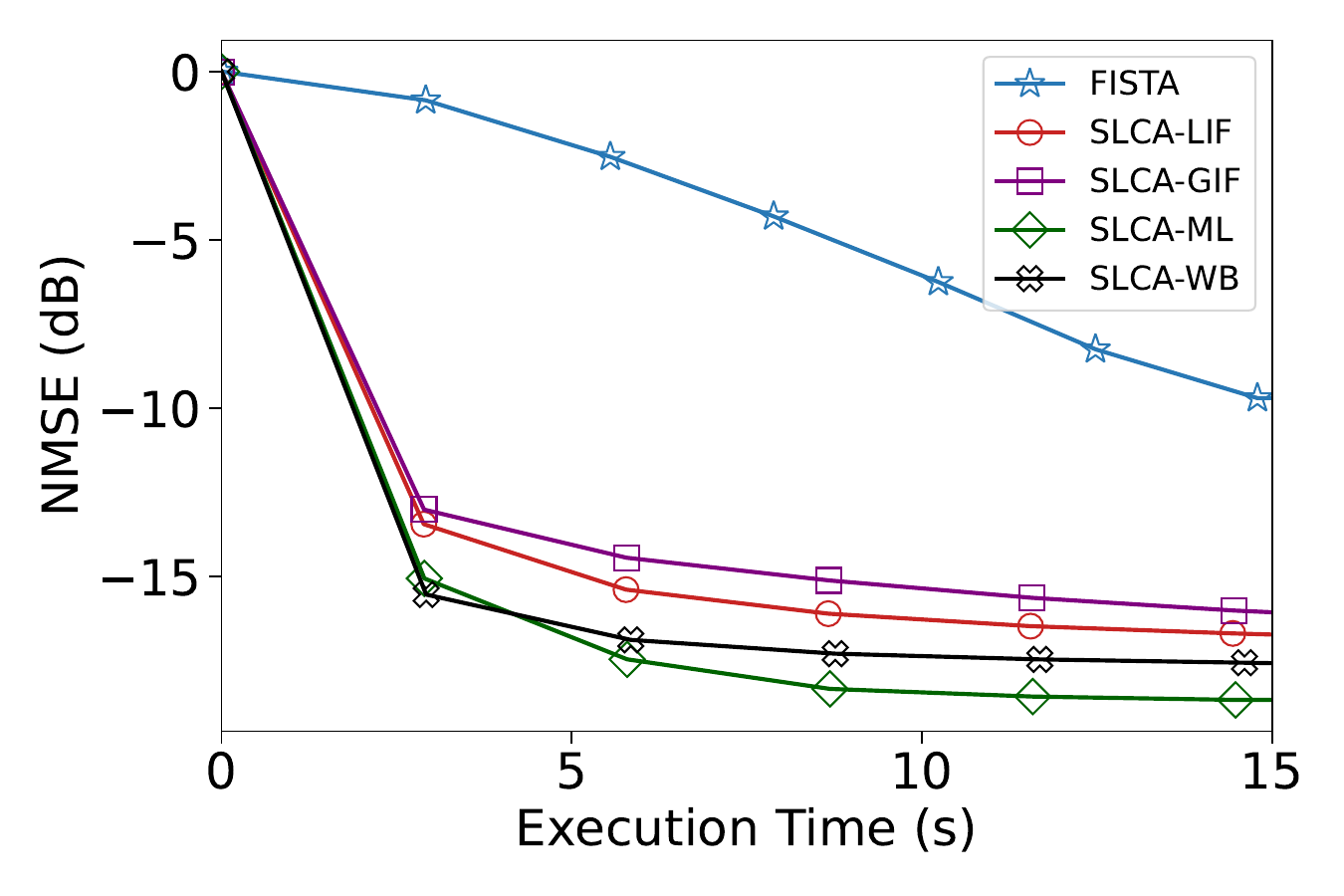}}
  \caption{Numerical results for the toy model. (a) The computational time for solving LASSO problems of varying sizes, comparing spiking neural networks with FISTA. (b) Performance of spiking neural networks based on various models, i.e., Leaky Integrate-and-Fire, Generalized Integrate-and-Fire model, Morris-Lecar, and Wang-Buzsaki model, in solving optimization problems, $A \in \mathbb{R
}^{5000 \times 10000}$. The metric $N M S E=10 \log _{10}\left(\|\hat{a}-a\|_2^2 /\|\hat{a}\|_2^2\right)$ indicates normalized mean square
error, where $\hat{a}$  denotes the original signal.}
  \label{fig:different_neurons}
\end{figure*}

In the last decade, sparse signal recovery, particularly in solving the LASSO problem, has attracted significant attention from researchers. Hence, to assess our algorithm's performance, we start with a toy model involving the recovery of a real-valued signal, denoted as $\mathbf{a} \in \mathbb{R}^{N}$, where $N$ is the signal length. We generate an observation matrix $A \in \mathbb{R}^{m \times n}
$ (with $m < n$), where each entry follows an independent Gaussian distribution. The observation vector $\mathbf{b}$ is the product of $A$ and $\mathbf{a}$, i.e., $\mathbf{b} = A\mathbf{a}$. In this scenario, $\mathbf{a}$ is a sparse vector with sparsity level $K$, meaning it has $K$ nonzero elements ($||\mathbf{a}||_0=K$).  Our experiments assess the execution time of FISTA and generalized Spiking LCA, and their recovery performance. As the signal recovery problem scale enlarges, FISTA's runtime significantly increases. In contrast, generalized Spiking LCA demonstrates a steadier ascent in computation time, as illustrated in Fig.~\ref{fig:different_neurons}(a). This difference is due to our algorithm's execution on the A100 GPU, which supports extensive parallel computations. As a result, the growth in problem size does not lead to a sharp rise in the computation time of our method. 

In principle, the execution time of the generalized spiking LCA can be further substantially reduced if we implement the algorithm on the neuromorphic chip like Loihi by achieving parallel computation
across all neuron nodes. In this context, Loihi dedicates additional resource cores to manage larger neuron sizes, enabling extensive parallel processing across all cores. Hence, the runtime of generalized Spiking LCA is more influenced by factors such as the number of neurons within a resource core and spike traffic rather than the specific scale of the optimization problem. This feature highlights the substantial potential of generalized Spiking LCA for practical applications and its promising advantages in real-world scenarios.

We next evaluate the performance of the generalized Spiking LCA. Apart from the LIF model, our algorithm can construct networks based on other biophysical neuron models, such as the GIF model \cite{mihalacs2009generalized}, the Morris-Lecar (M-L) model \cite{tsumoto2006bifurcations}, and the Wang-Buzsaki model \cite{wang1996gamma}. The detailed model description can be found in the Appendix. 
For the ease of comparative testing,  we set $n=10000$, $m=5000$, and $K=500$, and use normalized mean square error (NMSE) as a measure of estimation error. The simulation results are illustrated in Fig.~\ref{fig:different_neurons}(b), which infers that our algorithm has a quicker initial convergence than the FISTA method across diverse SNN architectures with different neuronal models.

\begin{figure*}
\centering
\subfloat[]{\includegraphics[width=2.6in]{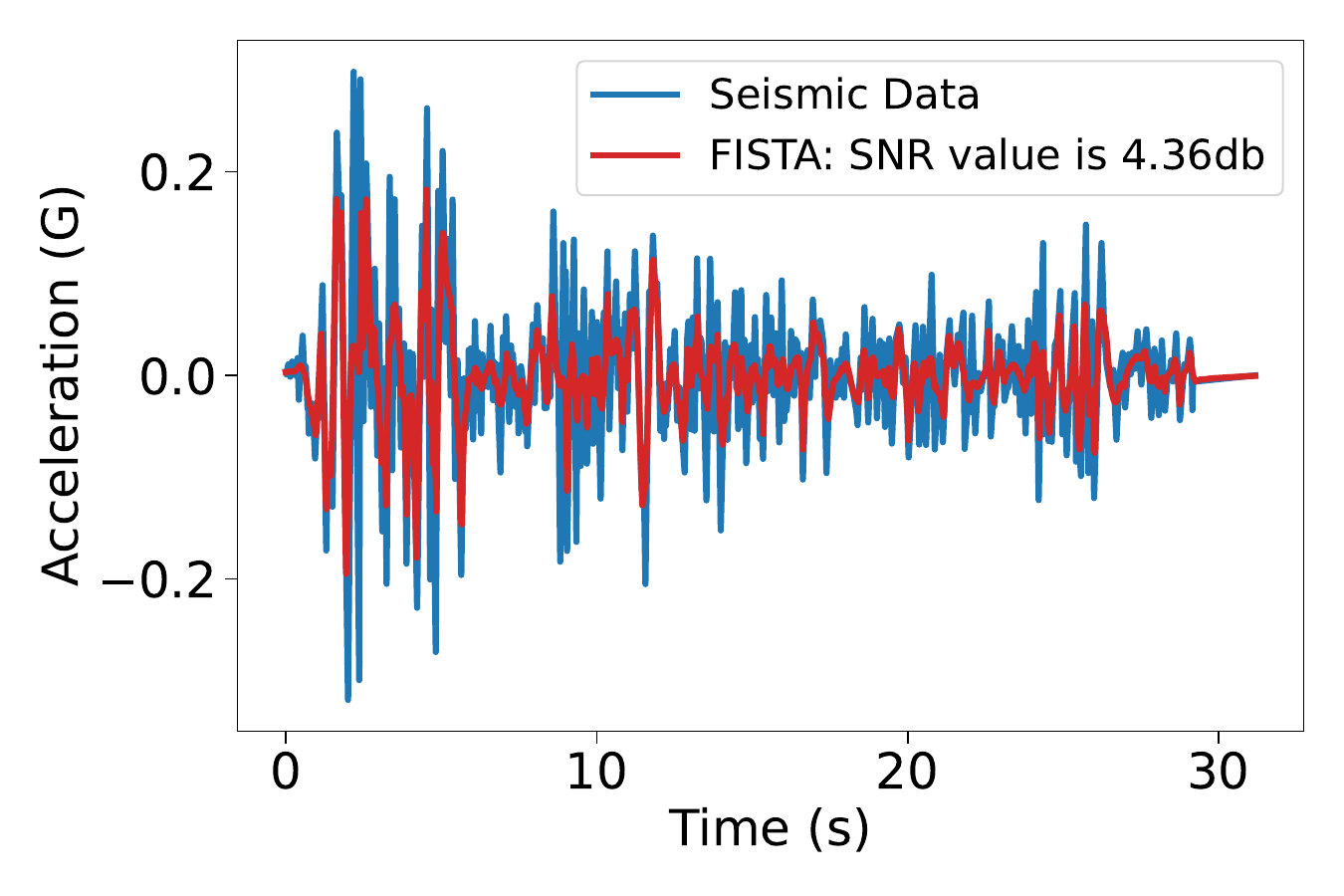}}
\hfil
\subfloat[]{\includegraphics[width=2.6in]{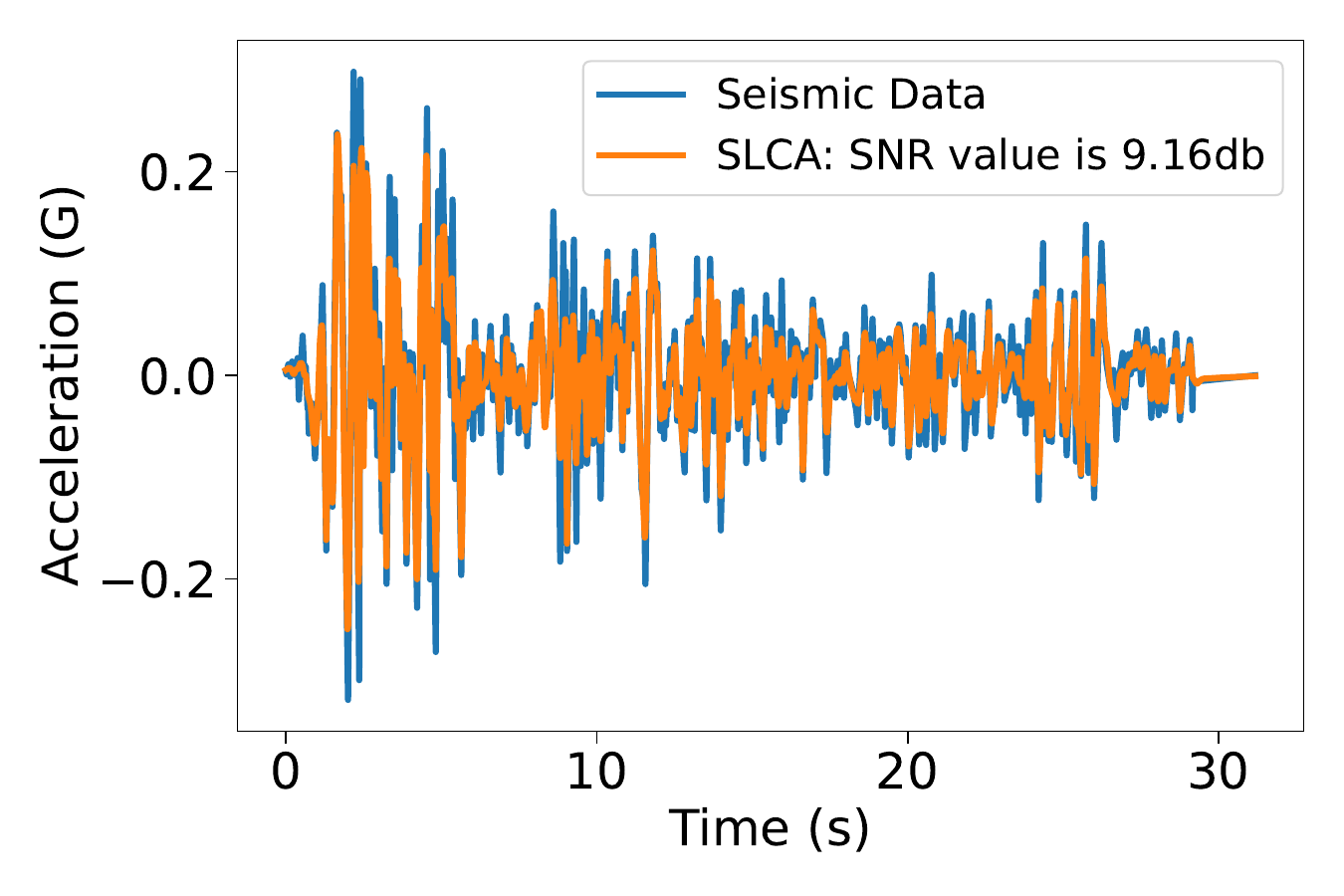}}
\caption{Comparison of seismic signal reconstruction performance using the FISTA and generalized Spiking LCA algorithms. (a) Reconstruction results of the FISTA algorithm, where the red line represents the reconstructed signal and the blue line represents the original data. (b) Reconstruction results of the generalized Spiking LCA algorithm, where the yellow line represents the reconstructed signal and the blue line represents the original data.}
\label{fig:seismic}

\end{figure*}

Next, we demonstrate the utility of our method in a more practical context: the reconstruction of seismic wave signals, a crucial aspect of the Earth's subsurface exploration and monitoring. Our experiment focuses on reconstructing seismic wave signals using the Ricker wavelet, renowned for its ability to provide a sparse representation of these signals.  It showcases a relatively straightforward appearance characterized by a dominant positive peak flanked by two negative side lobes. From an analytical perspective, its representation in the time domain is as follows:
\begin{equation}
\label{eq:ricker}
A(t)=\Big[1-2 \pi^2 f^2 (t-t_0)^2\Big] e^{-\pi^2 f^2 (t-t_0)^2}.
\end{equation}
Here, $A(t)$ delineates the wavelet's amplitude at time $t, f$ denotes the dominant frequency, and $t_0$ represents the center time of the wavelet. The goal is to recover the wavelet coefficient sequence $\mathbf{a}$ via the following model:

\begin{equation}
\label{Seismic_wave}
\min _{\mathbf{a} \ge 0} \frac{1}{2}\|\mathbf{s}-A \mathbf{a}\|_2^2+\lambda \|\mathbf{a}\|_1,
\end{equation}
where $\mathbf{s}$ is the measured seismic data, $A$ is the Ricker wavelet matrix, and $\lambda$ controls the trade-off between the data fitting and the sparsity terms. By adjusting the frequency and center time of Ricker wavelets, you can create a diverse set of wavelets, each with its own distinctive oscillation speed and temporal positioning. The matrix $A$ is then formed by sampling these varied wavelets at specific time points, with each column capturing the sampled values of a wavelet at a given frequency and center time, thereby encompassing a broad spectrum of seismic characteristics. Utilizing $A$, we model $\mathbf{s}$ by identifying an optimal combination of wavelet coefficients in the sequence $\mathbf{a}$. This approach enhances the interpretation of seismic data, improves the detection of seismic events, and facilitates the inversion process for estimating subsurface characteristics. 

We evaluate the performance of both FISTA and the generalized Spiking LCA in recovering this signal, specifically using the El Centro Earthquake dataset. The effectiveness of each algorithm is assessed using the signal-to-noise ratio (SNR). As shown in  Fig.~\ref{fig:seismic}, upon execution for a consistent timeframe of 2 seconds, the generalized Spiking LCA algorithm is demonstrated to have superior signal reconstruction quality compared to the FISTA algorithm, achieving an SNR value of 9.16 dB, in contrast to FISTA’s 4.36 dB.

\subsection{Computed Tomography construction}
\label{sec:CT}
\begin{figure*}[!t]
\centering
\subfloat[]{\includegraphics[width=1.75in]{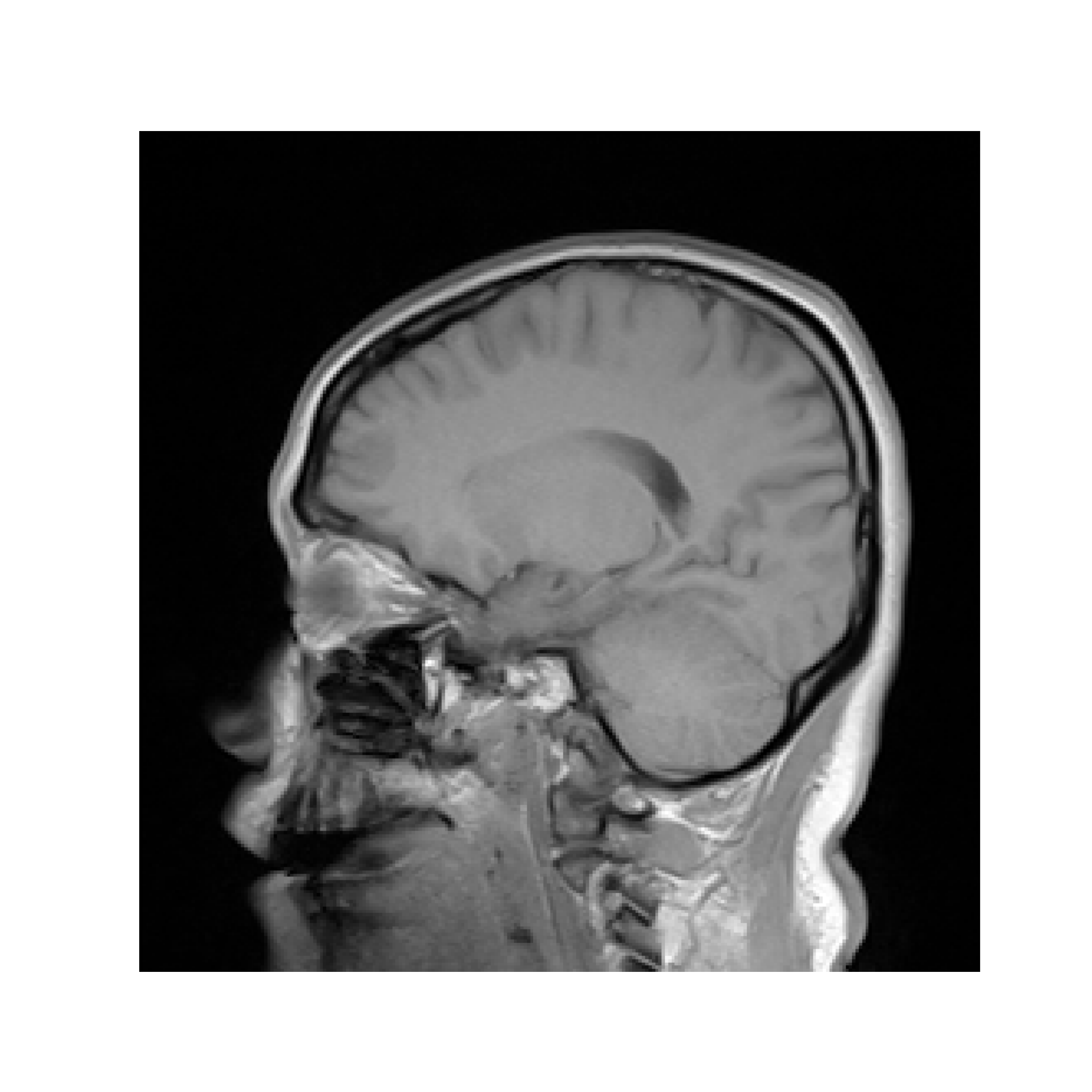}}
\hfil
\subfloat[]{\includegraphics[width=1.75in]{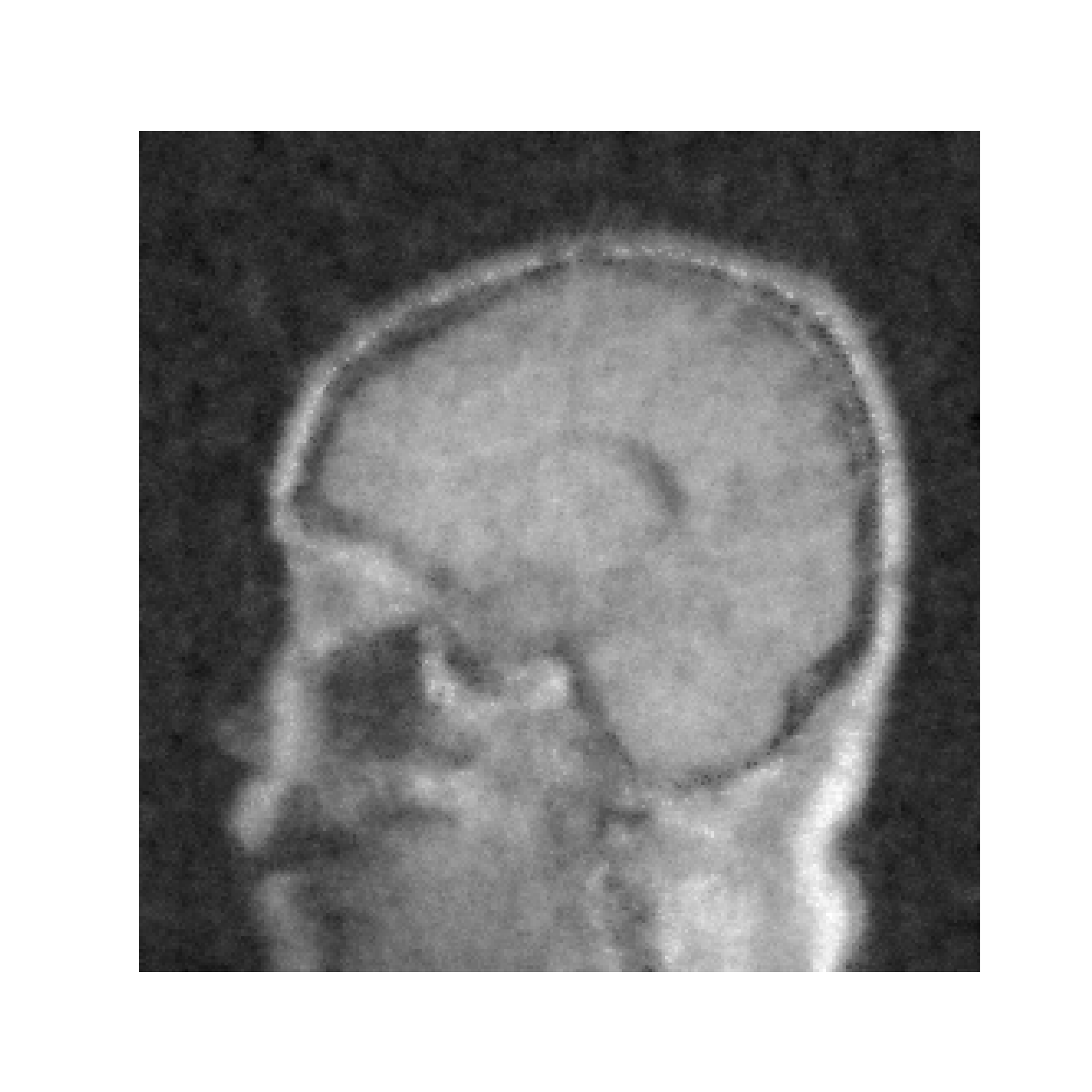}}
\hfil
\subfloat[]{\includegraphics[width=1.75in]{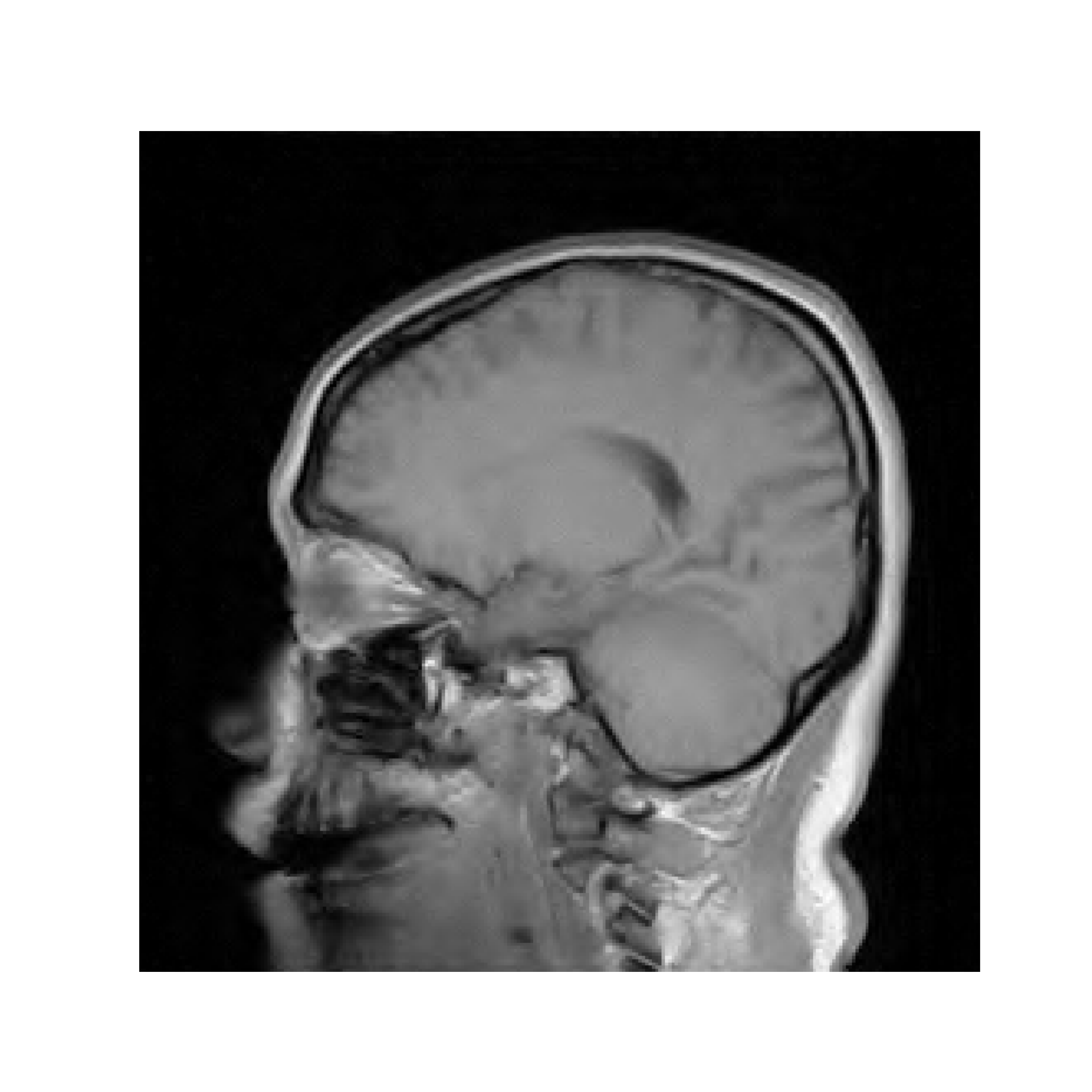}}
  \caption{Reconstruction of $256 \times 256$ pixel CT images from simulated CS acquisition. (a) The original image. (b) The reconstructed image by FISTA, with PSNR value 24.4 dB at 12s. (c) The reconstructed image by the generalized Spiking LCA, with PSNR value 34.6 dB at 12s. These comparative images demonstrate the superior performance of the generalized Spiking LCA algorithm over FISTA when both algorithms are run for the same time.}
  \label{fig:CT_reconstruction}
\end{figure*}

This section highlights our system's potential benefits in a medical imaging application, where real-time compressive sensing (CS) recovery techniques can provide substantial improvements. Computed tomography (CT) is a widely used imaging technique in medical diagnosis and treatment. It involves using X-rays to create detailed images of the body's internal structures. Compressive CT imaging is of significant interest because it can reduce scan times, improving patient throughput and safety by reducing radiation exposure.  

In this study, we explore the effectiveness of two different algorithms, generalized Spiking LCA and FISTA, for reconstructing a head CT image. The image can be represented as a vector $\mathbf{x}$, which is sparse in the Daubechies  wavelet basis, i.e., it can be represented using a small number of wavelet coefficients $\mathbf{x} = \Psi \mathbf{a}$, where $\Psi$ is the discrete orthogonal  wavelet transform matrix \cite{shen1998asymptotics}. The signal $\mathbf{x}$ can be measured using an $M \times N$ discrete Gaussian random measurement matrix $\Phi$, with $M \ll N$ (e.g., the size ratio $M/N = 0.2$), resulting in a compressed measurement vector $s \in \mathbb{R}^M$. The experiment's objective is to recover the head CT image $\mathbf{x}$ using the following model:

\begin{equation}
\label{CT_reconstruction}
\min _{\mathbf{a}}  \frac{1}{2}\|s-\Phi \Psi \mathbf{a}\|_2^2+\lambda 
\|\mathbf{a}\|_1
\end{equation}
Note that in this problem, the coefficient $\mathbf{a}$ is not restricted to $\mathbf{a}\ge 0$. Our algorithm can handle this problem as well with a modification in the procedure. Details of these modifications will be discussed in later section.

We evaluate the performance of the generalized Spiking LCA and FISTA algorithms for image reconstruction in CT using peak signal-to-noise ratio (PSNR) as a measure of estimation error. Fig.~\ref{fig:CT_reconstruction} shows that the generalized Spiking LCA algorithm can produce higher quality reconstructed images, while the quality of images obtained by FISTA is relatively poor.


\subsection{Extensions}
In our previous experiments, we set  $\widetilde{C}(\mathbf{a}) = \|\mathbf{a}\|_1$
in Eq. \ref{origianl_function} to solve the LASSO problem.
As we progress, we intend to show the versatility of our algorithmic framework in catering to diverse optimization problems. This adaptability can be achieved by substituting  $\widetilde{C}(\mathbf{a})$ with alternative penalty functions. Specifically, we investigate our algorithm's capability to tackle  the Elastic-Net optimization and the unconstrained LASSO problem, which are two important problems often encountered in real-world applications.  In fact, for any penalty function that satisfies the following rules,
\begin{enumerate}
    \item $\widetilde{C}(\cdot)$ is non-negative and subanalytic on $[0,+\infty)$.
    \item The first-order derivative of  $\widetilde{C}(\cdot)$ is continuous and non-negative on $[0,+\infty
)$, i.e., $\widetilde{C}^{\prime}(\cdot) > 0$.
    \item  Define $T_\lambda^{-1}(a) = \lambda \frac{d \widetilde{C}(a)}{d a} + a$, then the first-order derivative of  $T_\lambda(
\cdot)$ is continuous and positive on $(0,+\infty
)$. i.e., $T^{\prime}_\lambda(\cdot) > 0$.
\end{enumerate}
For rules 1 and 2, convergence of the corresponding LCA system is guaranteed~\cite{balavoine2013convergence,chen2014convergence,fosson2018biconvex,zhang2022spiking}. Regarding rule 3, it is established that within the interval $(0,+\infty)$, functions such as $T_\lambda(\cdot)$ and $T^{-1}_\lambda(\cdot)$ exist.  Based on these assurances, we can establish the following theorem:


\begin{theorem}
    Consider $\widetilde{C}(\cdot)$ satisfying  previously mentioned rules 1-3. Then, the firing rate $\mathbf{a}$ of the spiking neural network is globally asymptotically convergent, and $\mathbf{a}$ will converge to the solution of the corresponding optimization problem described by Eq. \ref{origianl_function}.
\end{theorem}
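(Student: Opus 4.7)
The plan is to mirror the argument for Theorem~1, replacing the soft-thresholding map that came from the $\ell^1$-penalty by a generalized thresholding map induced by $\widetilde{C}(\cdot)$, and then reducing the spiking dynamics to a known convergence result for the corresponding analog LCA. Concretely, rule~3 tells us that on $(0,+\infty)$ the function $T_\lambda^{-1}(a)=\lambda\,\widetilde{C}'(a)+a$ is continuous with strictly positive derivative, so it is a homeomorphism onto its image and admits a continuous, strictly increasing inverse $T_\lambda(\cdot)$. I would first formalize this inverse (extending it by $T_\lambda(u)=0$ for $u$ below the effective threshold so that it plays the same role as the soft-thresholding in Eq.~\ref{Threshold}), and then set the external current exactly as in Theorem~1, namely $I_i^{\text{input}}=g^{-1}(T_\lambda(u_i))$, so that, by continuity of the gain curve $g(\cdot)$, the firing rate satisfies $a_i=g(g^{-1}(T_\lambda(u_i)))=T_\lambda(u_i)$.

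Next I would re-derive the averaged-current equation exactly as Eq.~\ref{gen Spiking LCA} is derived, and then establish the analog of Lemma~2 for boundedness of $\mu_i(t)$ and $u_i(t)$. The argument carries over verbatim because it relied only on the fact that biologically plausible neuron models have finite maximum firing rates (giving a uniform lower bound on inter-spike intervals $1/R$), not on the sign pattern of $W$. This is precisely the point emphasized in Lemma~2 that permits both excitatory and inhibitory $w_{ij}$. With the soma currents uniformly bounded, Bolzano--Weierstrass yields a subsequence $t_k\to\infty$ with $u(t_k)\to u^*$, and Eq.~\ref{aike} gives $\dot u_i(t_k)\to 0$, so $u^*$ satisfies the fixed-point relation
\begin{equation}
b_i - u_i^* - \sum_{j\neq i} w_{ij}\,T_\lambda(u_j^*)=0,\qquad a_i^*=T_\lambda(u_i^*).
\end{equation}

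The final step is to identify this fixed point with a global minimizer of Eq.~\ref{origianl_function} with penalty $\widetilde{C}$. Here I would invoke rules~1--2 together with the convergence theorems for generalized analog LCA established in \cite{balavoine2013convergence,chen2014convergence,fosson2018biconvex,zhang2022spiking}: under those hypotheses the analog LCA dynamics
\begin{equation}
\dot u_i = b_i - u_i - \sum_{j\neq i} w_{ij}\,T_\lambda(u_j),\qquad a_i=T_\lambda(u_i),
\end{equation}
is globally asymptotically convergent and its equilibria coincide with the critical points of Eq.~\ref{origianl_function}; rules~1--2 further ensure that the objective is lower-semicontinuous and coercive enough that such a critical point is in fact a minimizer. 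Since $u^*$ satisfies the same fixed-point equation, $\mathbf{a}^*=T_\lambda(u^*)$ is the sought optimum, and $\mathbf{a}(t)\to\mathbf{a}^*$ along the subsequence. Uniqueness of the limit point (and hence of the full limit $\lim_{t\to\infty}\mathbf{a}(t)$) follows from the global asymptotic stability of the analog LCA equilibrium combined with the vanishing residual $[u_i(t)-u_i(t_0)]/(t-t_0)\to 0$.

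The hardest part will be the last step: carefully transferring global asymptotic convergence from the analog LCA (where it is known under rules~1--2) to the spiking system, because the spiking dynamics only coincides with analog LCA in the $t\to\infty$ limit and carries an extra vanishing term. To handle this I would either (i) view the spiking equation as an asymptotically autonomous perturbation of the analog LCA and apply a LaSalle-type argument using $E(\mathbf{a})$ from Eq.~\ref{origianl_function} as a Lyapunov function, exploiting non-negativity and subanalyticity from rule~1 to rule out nontrivial $\omega$-limit sets, or (ii) argue that every limit point of $\{u(t_k)\}$ must lie in the equilibrium set of analog LCA and invoke the uniqueness (or at least the objective-value uniqueness) of minimizers guaranteed by the cited convergence results. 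The remaining steps—constructing $T_\lambda$ from rule~3, forcing $a_i=T_\lambda(u_i)$ via $g^{-1}$, and bounding the soma currents—are essentially mechanical once Theorem~1 is in hand.
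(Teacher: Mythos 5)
Your proposal follows essentially the same route as the paper's own proof: use rule~3 to define the generalized thresholding map $T_\lambda(\cdot)$ as the inverse of $T_\lambda^{-1}(a)=\lambda\,\widetilde{C}'(a)+a$, inject $I_i^{\text{input}}=g^{-1}(T_\lambda(u_i))$ so that $a_i=T_\lambda(u_i)$, carry over the boundedness and limit-point arguments from Theorem~1, and appeal to the cited analog-LCA convergence results under rules~1--2. In fact your write-up is more careful than the paper's, which compresses the boundedness step, the Bolzano--Weierstrass argument, and the passage from the perturbed spiking dynamics to the analog LCA limit into a single sentence; the subtlety you flag about the vanishing residual term is real and is handled in the paper only implicitly by reference to the Theorem~1 machinery.
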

\begin{proof}
    Let $\widetilde{C}(\cdot)$ adhere to all the stipulations previously outlined. Under such a condition,  the average current dynamics of the $i^{th}$ neuron in the spiking neural network  satisfies 

\begin{equation}
\label{Theorem_2}
\begin{aligned}
\dot{u}_i(t) &=b_i(t)-u_i(t)-\sum_{j \neq i} w_{i j} a_j(t)-\frac{[u_i(t)-u_i\left(t_0\right)]}{t-t_0}, \\
a_{i} &= T_\lambda(u_i), \quad 
\lambda \frac{d \widetilde{C}\left(a_i\right)}{d a_i}=u_i-a_i=T_\lambda^{-1}\left(a_i\right)-a_i ,
\end{aligned}
\end{equation}
where the interconnection between $\widetilde{C}(a_i)$ and $T_\lambda(u_i)$ is given by the equation $\lambda \frac
{d \widetilde{C}(a_i)}{d a_i} = T_\lambda^{-1}(a_i) - a_i$. By utilizing this relationship, we can solve for $a_i$ in terms of $u_i$ to determine the corresponding activation function $T_\lambda(\cdot)$.

To tackle the  optimization problem by Eq. \ref{origianl_function}, we perform the following steps for each neuron in our network. First, we compute the average current $u_i(t)$ at every iteration step. Then, we inject the current $I_{i}^{\text{input}} = g^{-1}\left(T_{\lambda}\left(u_{i}\right)\right)$, where different activation functions $T_{\lambda}(\cdot)$ are used for the optimization problem with different penalty functions. This leads to the neuron's output activation $a_i = g\left(I_{i}^{\text{input}}\right) = g\left(g^{-1}\left(T_{\lambda}\left(u_{i}\right)\right)\right) = T_{\lambda}\left(u_{i}\right)$. This procedure ensures the convergence of our spiking neural network towards the solution of the general optimization problem (Eq. \ref{origianl_function}).
\end{proof}

For some penalty function $\widetilde{C}(\cdot)$, the expression for the activation functions $T_\lambda(\cdot)$ can be quite complex, making the numerical implementation of the generalized Spiking LCA inefficient. To solve this issue, we adapt an alternative strategy where we do not solve $T_\lambda(\cdot)$ explicitly. Instead, we inject the input current as $I_i^{\text {input }}=g^{-1}\left(u_i - \lambda \frac{d \widetilde{C}\left(a_i\right)}{d a_i}  \right)$. This approach can also lead to the neuron's output activation 
\begin{equation*}
   a_i=g\left(I_i^{\text {input }}\right)=g\left(g^{-1}\left(u_i -\lambda \frac{d \widetilde{C}\left(a_i\right)}{d a_i}\right)\right)= u_i -\lambda \frac{d \widetilde{C}\left(a_i\right)}{d a_i} = T_\lambda\left(u_i\right). 
\end{equation*}
The procedure also guarantees that the spiking neural network converges to the solution of the general optimization problem of Eq. \ref{origianl_function}.

\subsubsection{Elastic-Net}
\begin{figure*}[!t]
\centering
\subfloat[]{\includegraphics[width=2.6in]{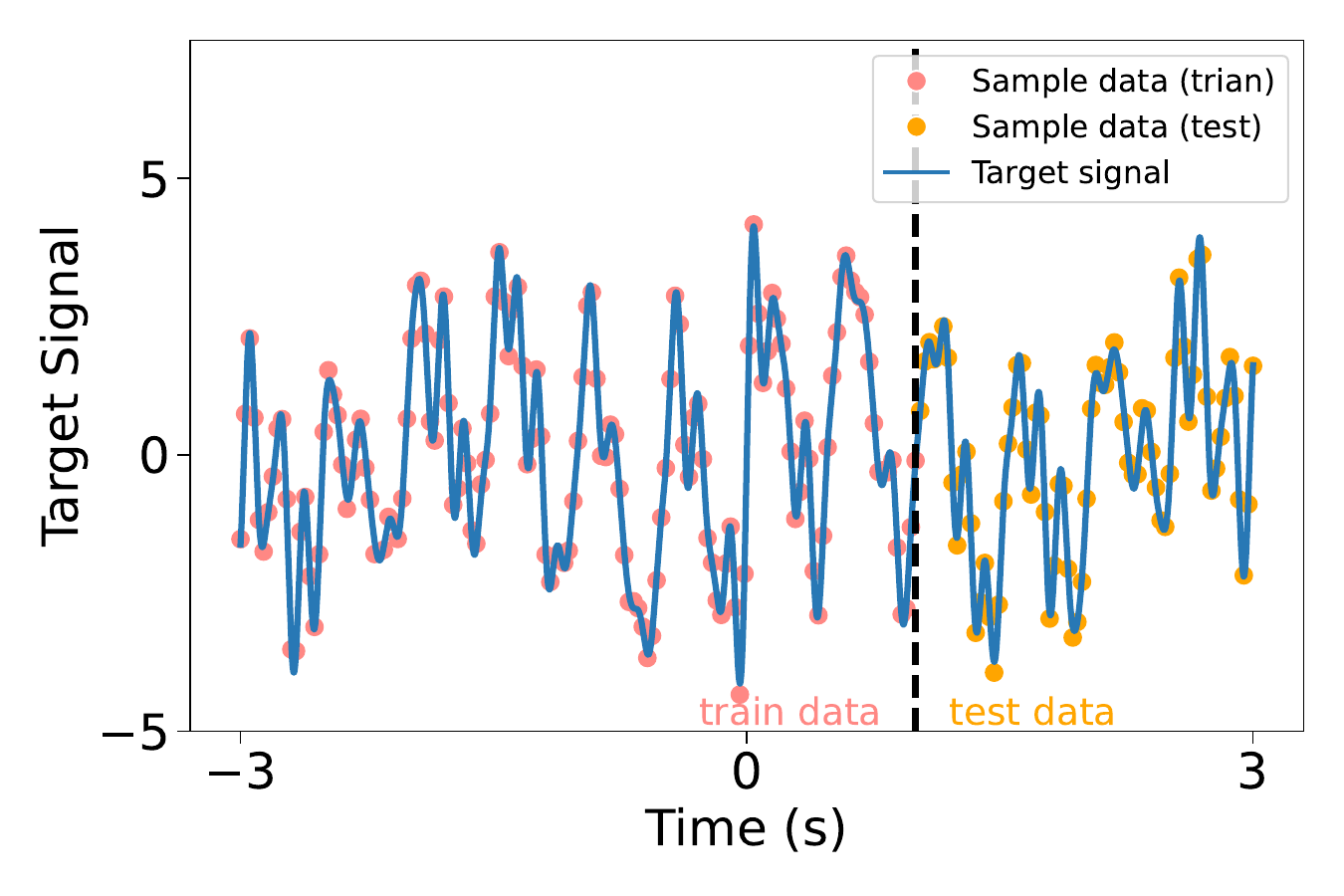}}
\hfil
\subfloat[]{\includegraphics[width=2.6in]{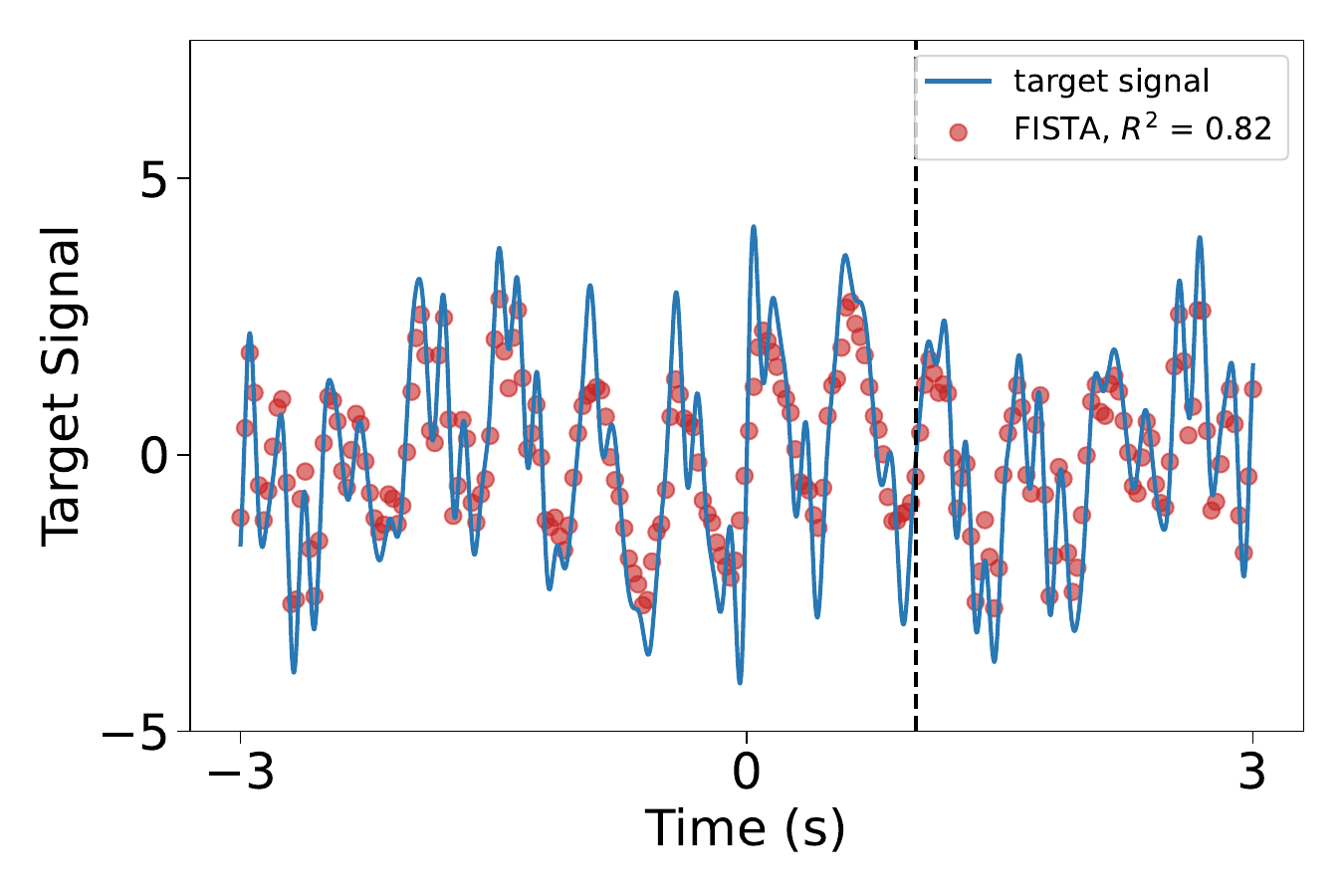}}
\\
\subfloat[]{\includegraphics[width=2.6in]{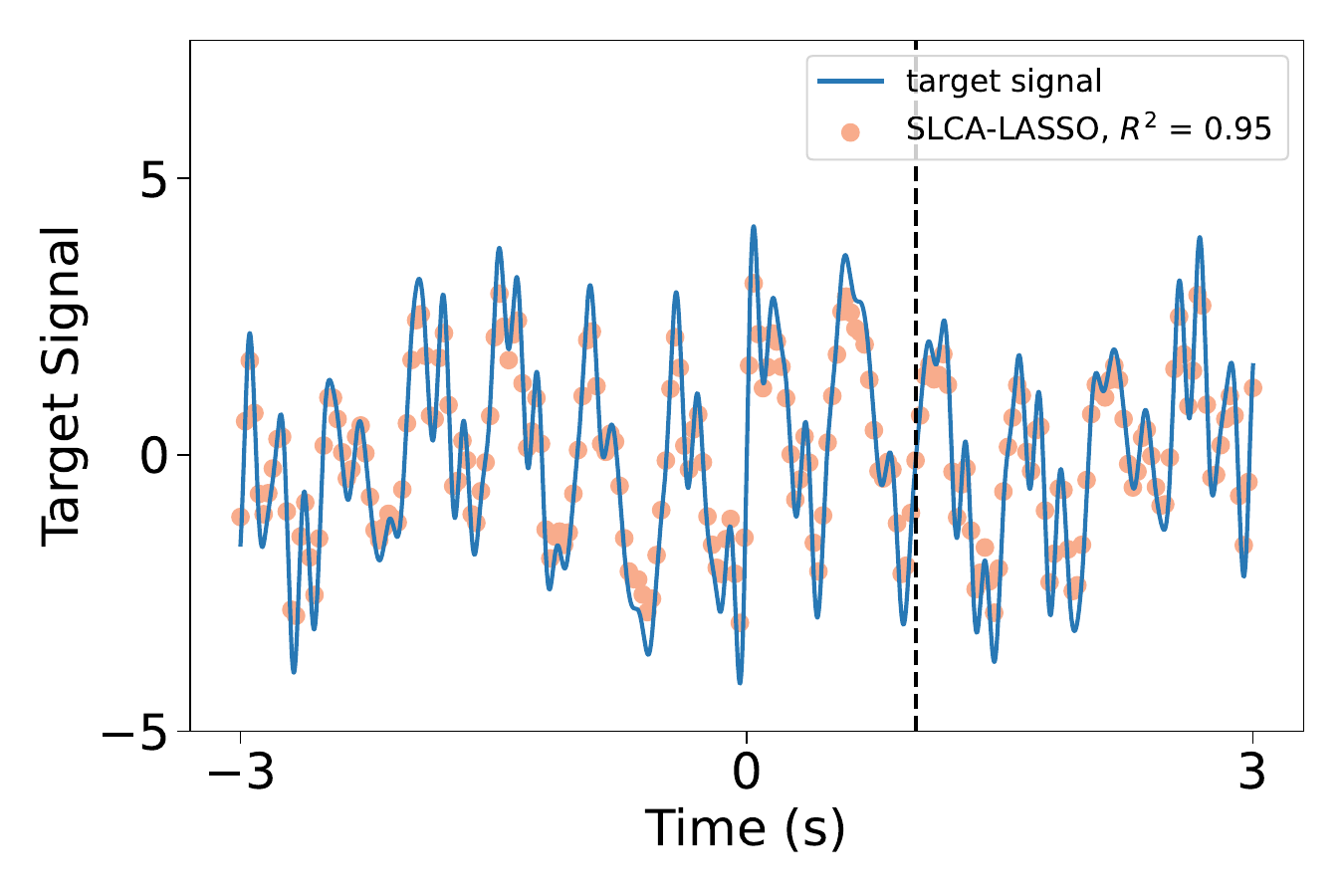}}
\hfil
\subfloat[]{\includegraphics[width=2.6in]{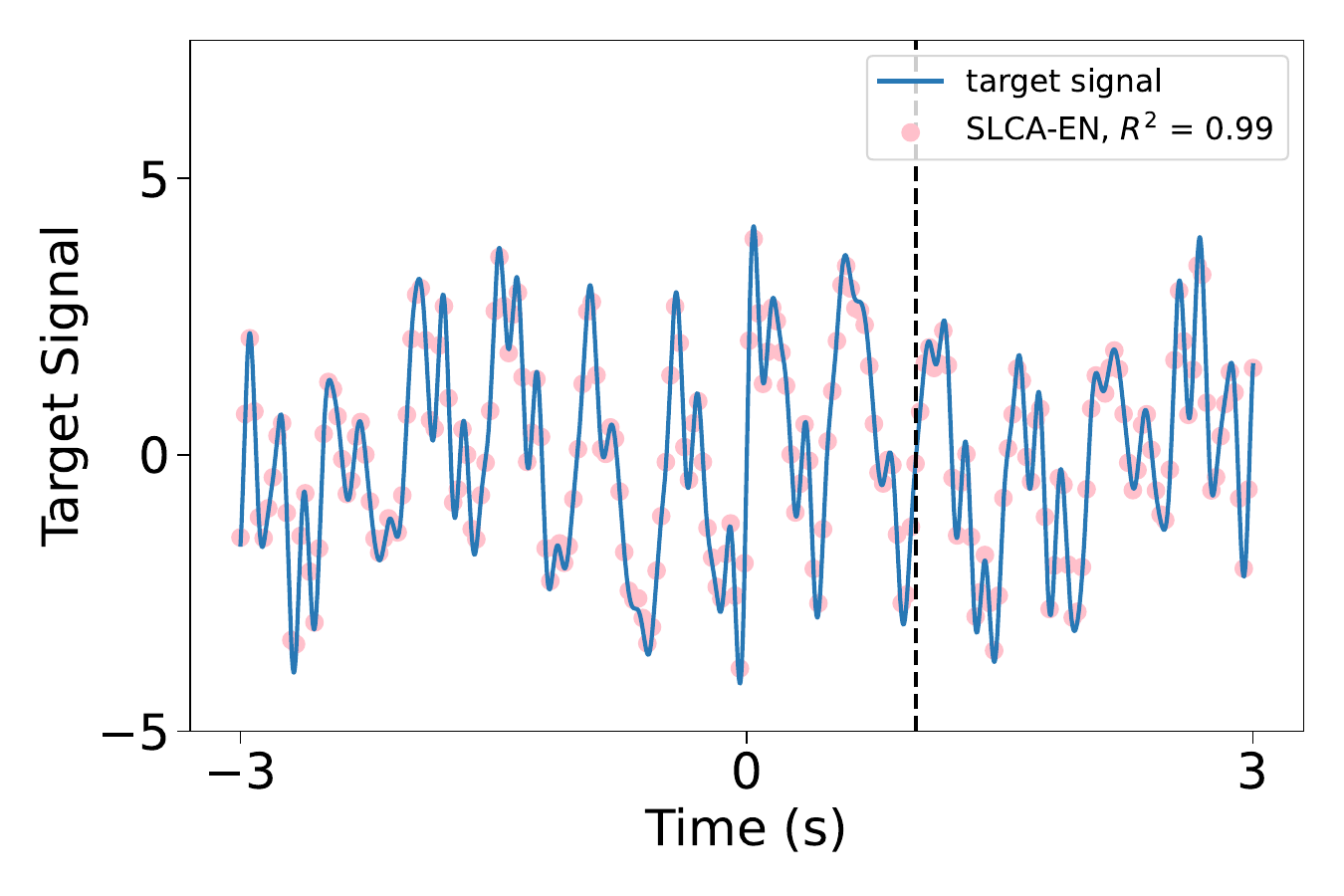}}
\caption{LASSO and Elastic-Net methods for sparse signals recovery. (a)  We partition the sampled points into training and testing sets to restore superimposed sinusoidal signals. (b) Reconstruction results of the FISTA algorithm, where the red pot represents the reconstructed signal and the blue line represents the  target signal. The $R^2$ value is used to measure the discrepancy between the recovered signals in the test set and the actual data points. The execution time for the algorithm is standardized at 2 seconds, during which the FISTA optimization algorithm achieves an $R^2$ value of 0.82.  (c) The $R^2$ value for LASSO is 0.95. (d)  Elastic-Net achieves an $R^2$ value of 0.99. 
}
\label{fig:Elastic_Net}
\end{figure*}

The Elastic-Net combines the $\ell^{1}$ and $\ell^{2}$ penalties of the LASSO and Ridge methods in a unified regularization approach  \cite{zou2005regularization}, described as:
\begin{equation}
\label{elastic_net}
   \min _{\mathbf{a} \geq 0} \frac{1}{2}\|\mathbf{s} -\Phi \mathbf{a}\|_2^2+\lambda(\rho\|\mathbf{a}\|_1+\frac{1-\rho}{2}\|\mathbf{a}\|_2^2)
\end{equation}

The generalized Spiking LCA framework is able to incorporate the Elastic-Net formulation 
by modifying the slope of the activation function $T_{\lambda}(\cdot)$ as follows:

\begin{equation}
\label{elastic_net_lambda}
T_\lambda(a) \stackrel{\text { def }}{=} \begin{cases}0 & \text { if } a \leq \lambda \rho \\ \frac{a-\lambda \rho}{\lambda (1-\rho)+1} & \text { if } a>\lambda \rho\end{cases}
\end{equation}

To demonstrate the effectiveness of the generalized Spiking LCA on solving the Elastic-Net problem, we generate a dataset with sample size smaller than the total number of features, as an underdetermined problem. The target variable $\mathbf{s}$ is formed by combining sinusoidal signals with different frequencies. From the 100 frequencies in $\Phi$, only the lowest 10 are utilized to generate $\mathbf{s}$. The remaining features remain inert, rendering the feature space both high-dimensional and sparse, thus requiring a certain level of $l_1$-penalization. 

We then split the data into training and testing sets. In Fig.~\ref{fig:Elastic_Net}(a), the blue line represents the signal we aim to reconstruct, and the light red and orange dots are the sampled points. Due to noise, these sampled points deviate from the true signal values. The light red dots serve as the training dataset, used to determine the coefficient value $\mathbf{a}$, while the orange dots act as the testing dataset, to which we subsequently apply this determined value. The performance of algorithm is evaluated based on their goodness of fit score.  Fig.~\ref{fig:Elastic_Net}(b-d) display the results of the FISTA and the generalized Spiking LCA when applied to LASSO and Elastic-Net models. Both algorithms are run for 2 seconds. The generalized Spiking LCA consistently demonstrated more accurate predictions compared to FISTA. Moreover, the results underscore that  Elastic-Net outperforms in terms of $R^2$ score. While LASSO is renowned for its capability in sparse data recovery, it underperforms when the features are highly correlated. Indeed, when several correlated features influence the target, LASSO selects only one representative feature from the group and discards the others. This can lead to potential loss of information.  In contrast, Elastic-Net promotes sparsity in coefficient selection and slightly shrinks towards zero. Therefore, Elastic-Net adjusts their weights without eliminating them. This produces a less sparse model than a pure LASSO model. 

\subsubsection{LASSO}
Next, we extend the generalized Spiking LCA to solve the unconstrained LASSO problem. We introduce non-negative variables $\mathbf{a}^{+} \geq 0$ and $\mathbf{a}^{-} \geq 0$, such that $\mathbf{a}=\mathbf{a}^{+}-\mathbf{a}^{-}$ and $|\mathbf{a}|=\mathbf{a}^{+}+\mathbf{a}^{-}$. By defining $\widetilde{\Phi}=\left[\Phi , -\Phi\right] $ and $\mathbf{z}=\left[\mathbf{a}^{+} , \mathbf{a}^{-}\right]$, we reformulate the objective function of the unconstrained LASSO as:

\begin{equation}
\begin{gathered}
\min _{\mathbf{a} \in \mathbb{R}^n} \sum_{i=1}^n \mathbf{a}^{+}+\mathbf{a}^{-} \\
\text {s.t. } \Phi \mathbf{a}^{+}-\Phi \mathbf{a}^{-}
= \mathbf{s} . \\
\mathbf{a}^{+},\mathbf{a}^{-} \geq 0
\end{gathered} \quad  \equiv \begin{aligned}
& \min _{\mathbf{z} \ge 0}\|\mathbf{z}\|_1, \\
& \text { s.t. } \widetilde{\Phi} \mathbf{z}=\mathbf{s} .
\end
{aligned}
\end{equation}
The problem can be equivalently described using a Lagrangian multiplier approach as follows:

\begin{equation}
\label{LASSO_original}
\min _\mathbf{z} \frac{1}{2}\|\mathbf{s}-\widetilde{\Phi} \mathbf{z}\|_2^2+\lambda \sum_{k=1}^{2 N} z_k, \quad z_k \geq 0.
\end{equation}
Then we introduce a logarithmic barrier term to ensure that $z_k \geq 0$, transforming the constrained optimization problem into an unconstrained problem \cite{charles2011analog}. This can be written as:

\begin{equation}
\label{log_barrier}
\begin{aligned}
\min _\mathbf{z} & \frac{1}{2}\|\mathbf{x}-\widetilde{\Phi} \boldsymbol{z}\|_2^2+\lambda \sum_{k=1}^{2 N} z_k-\left(\frac{1}{\gamma}\right) \sum_{k=1}^{2 N} \log \left(z_k\right) \\
\text{s.t.} & \quad C\left(z_k\right)=z_k-\frac{\log \left(z_k\right)}{\gamma \lambda}
\end{aligned}
\end{equation}
where $C(z_k)$ represents a differentiable penalty function. The interconnection between $C(z_k)$ and $T_\lambda^{-1}(z_k)$ is given $\lambda \frac{d C(z_k)}{d z_k} = T_\lambda^{-1}(z_k) - z_k$, which allows one to solve for $z_k$ in terms of $u_k$ to determine the corresponding activation function $T_\lambda(\cdot)$. As an illustrative example, the generalized Spiking LCA has been effectively applied to reconstruct the CT image in Sec.~\ref{sec:CT}, which belongs to the unconstrained LASSO problem.

\subsubsection{Non-convex penalty functions}
\begin{figure*}[!t]
\centering
\subfloat[]{\includegraphics[height=1.75in]{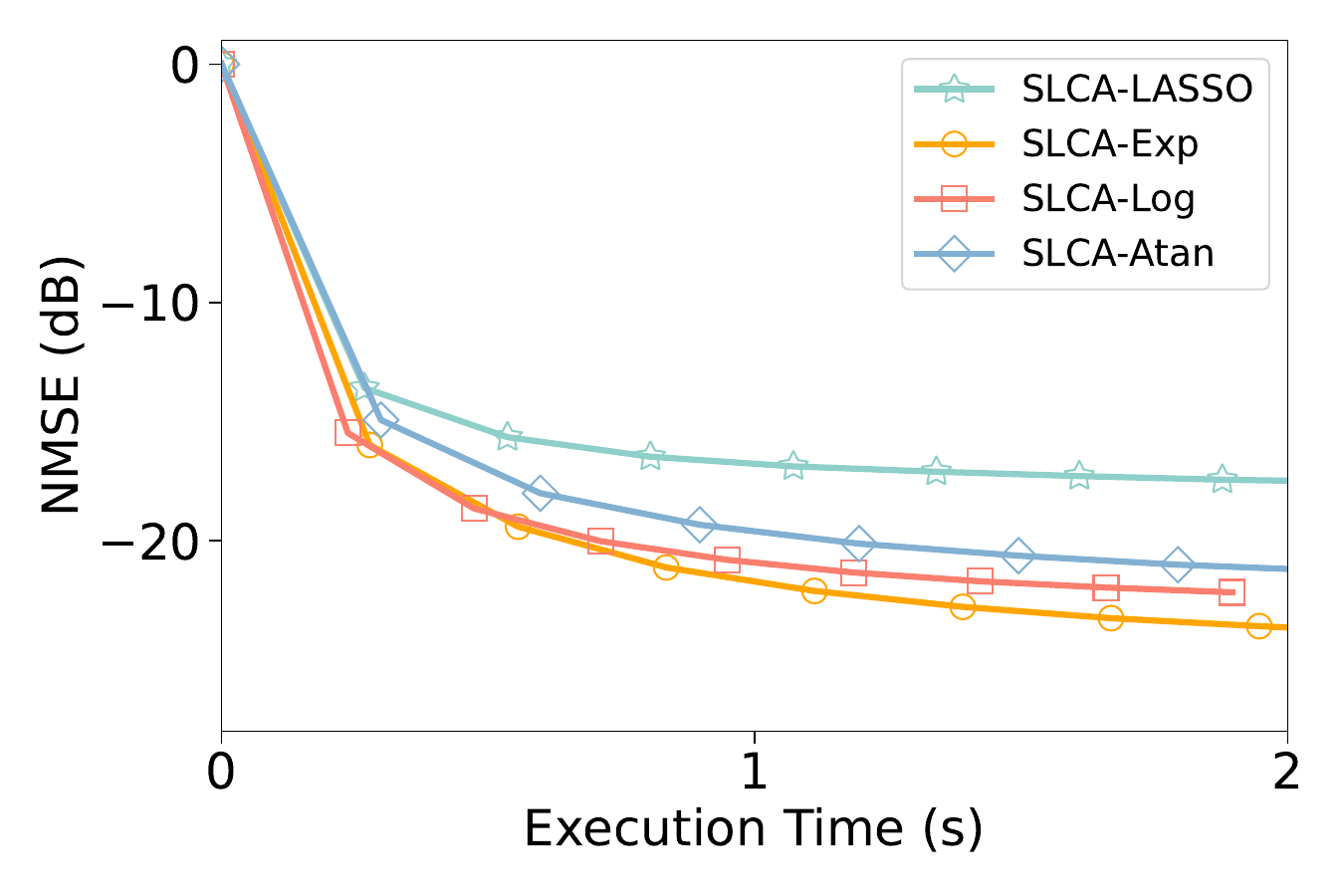}}
\hfil
\subfloat[]{\includegraphics[height=1.75in]{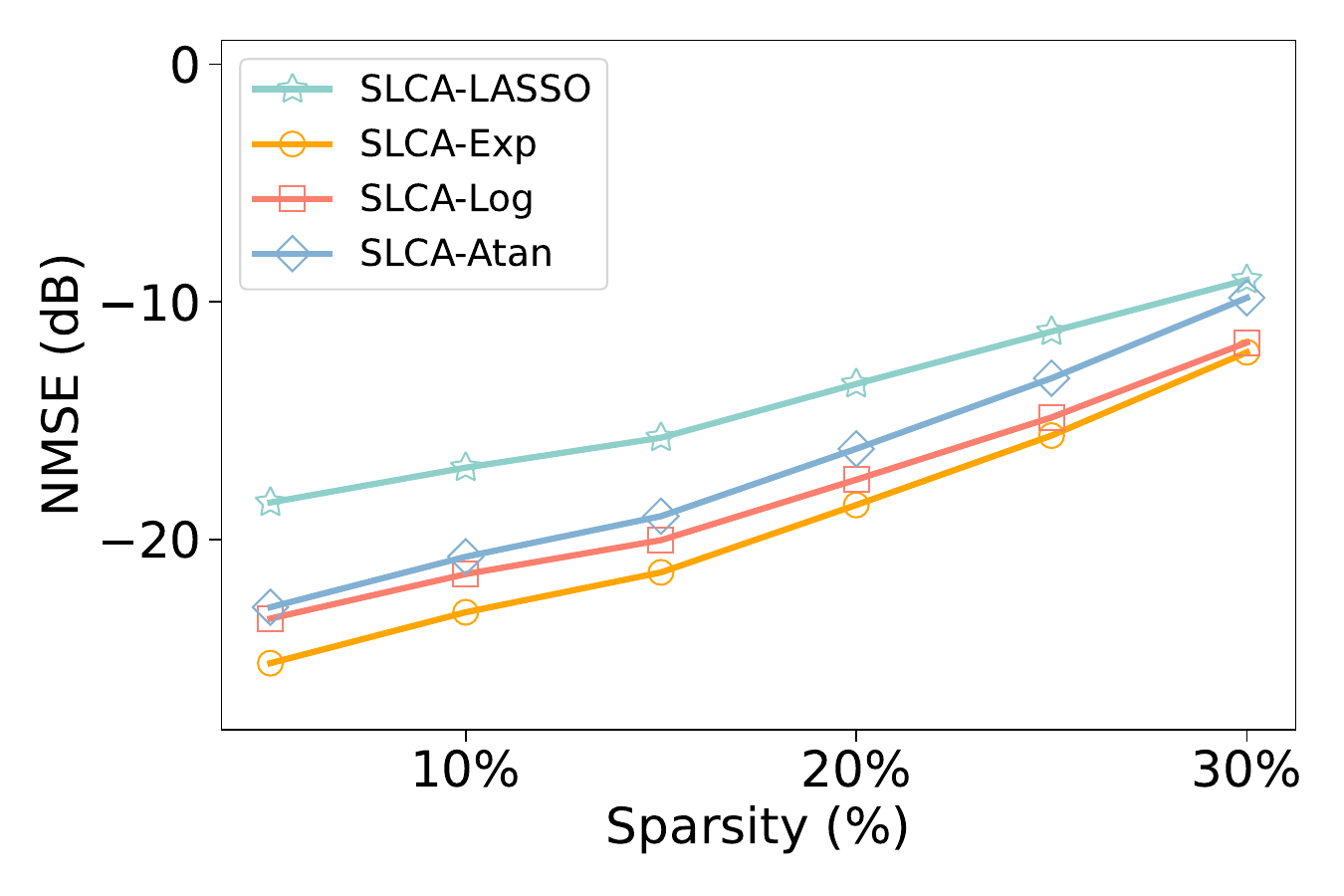}}
\caption{Comparison of the generalized Spiking LCA algorithm's performance using non-convex penalty functions. (a) Convergence among four different non-convex penalty functions. (b) Performance across diverse sparsity levels.}
\label{fig:multi_task}
\end{figure*}

In the above, we mainly focuse on convex penalty functions, including L1 and L2 regularity. Next we investigate the performance of generalized Spiking LCA with non-convex penalty functions, for instance, exponential, logarithmic, and arctangent functions. The exponential penalty is defined as $\widetilde{C}
(x)=1-e^{-\gamma x}$ with $\gamma > 0$. The logarithmic penalty is given by  $\widetilde{C}(x)=log(x + \theta)$ where $\theta \ge 1$, and the arctangent penalty is expressed as  $\widetilde{C}(x)= arctan(x/\eta)$ with $\eta > 0$. These non-convex functions need to satisfy rules 1-3 to ensure stability and convergence during experiments. It is obvious that these functions meet rules 1 and 2. For rule 3, there is a wide range of possible penalties that can be applied under an appropriate $\lambda$,  To meet this rule, the condition $\widetilde{C}^{\prime \prime}(x)>$ $-\frac{1}{\lambda}$ must be satisfied for all $x \in(0,+\infty)$.

In our numerical experiments, we set parameters $\gamma = 1$, $\theta = 1$, $\eta = 1$, and $\lambda = 0.1$. It is important to note that for vector $\mathbf{a}$, the function $\widetilde{C}(\cdot)$ operates on each component of the vector individually. For example, with $\widetilde{C}(x)=1-e^{-x}$, the function applied to vector $\mathbf{a}$ would be $\widetilde{C}(\mathbf{a})=1-e^{-\mathbf{a}}$, where the $i$-th component is $\widetilde{C}\left(a_i\right)=1-e^{-a_i}$. Additionally, the strategy applied to the current model adheres to the second approach delineated in Theorem 2, allowing for a robust and effective application of the theoretical framework. In our experiments, we focus on the signal recovery problem, same task described in Sec.~\ref{sec:CT}. The choice of penalty function also influences the algorithm's performance. The exponential penalty is the optimal choice among the penalties tested, as illustrated in Fig.~\ref{fig:multi_task}(a). Fig.~\ref{fig:multi_task}(b) illustrates the performance across varying sparsity levels, suggesting that the reconstruction quality improves as the signal becomes more sparse, which is evident from the decreasing NMSE values.

\section{Conclusion}

This paper develops a novel optimization algorithm for constructing spiking neural networks, generalized from the classical Spiking LCA. Our algorithm's unique feature is its flexibility, allowing it to handle various optimization problems within a unified network architecture. This is achieved by simply adjusting the external input current to neurons, which enhances the algorithm's versatility and minimizes the need for architectural adjustments, particularly when implemented on neuromorphic chips. The proposed spiking neural network is based on a large number of biologically plausible neuron models, encompassing a range from the LIF model to the Hodgkin-Huxley type model. Besides, we have demonstrated our model's practical utility and efficiency by applying it to various real-world problems related to compressed sensing and signal processing. By outperforming popular optimization algorithms, such as FISTA, our model confirms its capability to address large-scale optimization problems more effectively.

\section*{Acknowledgments}
\noindent This work was supported by Science and Technology Innovation 2030 - Brain Science and Brain-Inspired Intelligence Project with Grant No. 2021ZD0200204  (S.L., D.Z.); National Natural Science Foundation of China Grant 12271361, 12250710674 (S.L.); National Natural Science Foundation of China with Grant No.  12225109, 12071287 (D.Z.), Lingang Laboratory Grant No. LG-QS-202202-01, and the Student Innovation Center at Shanghai Jiao Tong University (X.-x.D., Z.-q.K.T., S.L., and D.Z.).

\section*{CRediT authorship contribution statement}

\textbf{Xuexing Du:} Conceptualization, Methodology, Software, Visualization, Writing – original draft. \textbf{Zhong-qi K. Tian:} Conceptualization. \textbf{Songting Li:} Conceptualization, Methodology, Funding acquisition, Supervision, Software, Visualization, Writing – original draft, Writing – review \& editing. \textbf{Douglas Zhou:} Conceptualization, Methodology, Funding acquisition, Supervision, Software, Visualization, Writing – original draft, Writing – review \& editing.

\section*{Declaration of competing interest}
The authors declare that they have no known competing financial interests or personal relationships that could have appeared to
influence the work reported in this paper.

\appendix
\section{Appendix}
\subsection{Neuron Models}
\label{appendix:neuron_model}
\noindent \textbf{GIF model.}  The dynamics of the $i^{th}$ neuron of a generalized leaky integrate-and-fire ($\mathrm{GIF}$ ) network is governed~\cite{mihalacs2009generalized, teeter2018generalized}
\begin{equation}
\begin{aligned}
c \frac{\mathrm{d} v_i}{\mathrm{~d} t} & = -g_L\left(v_i-v_{L}\right)+ \sum_j I_j+ I_i^{\text {input }}, \\
\frac{\mathrm{d} \theta_i}{\mathrm{d} t} & =a\left(v_i- v_{L}\right)-b\left(\theta_i-\theta_{\infty}\right), \\[10pt]
\frac{\mathrm{d} I_j}{\mathrm{~d} t} & =-k_j I_j, \quad j=1,2, \ldots, n \\[10pt]
\text { if } v_i & >\theta_i, \quad I_j \leftarrow r_j I_j+A_j, v_i \leftarrow v_{\text {reset }}, \theta_i \leftarrow \max \left(\theta_{\text{reset}}, \theta_i\right),
\end{aligned}
\end{equation}
where $v_i$ represents the membrane potential of the neuron, and $c$ refer to the membrane capacitance. The terms $g_L$ and $v_L$ are used to denote the leak conductance and the reversal potential, respectively. $I_j$ represents an arbitrary number of internal currents. These currents are primarily influenced by the dynamics of ion channels, providing the model with the flexibility to simulate various neuronal firing patterns. Additionally, $I_i^{\text{input}}$ denotes the externally injected current. The model encompasses a total of $n+2$ variables: the membrane potential $v_i$, the membrane potential threshold $\theta$, and $n$ internal currents. The decay of each internal current $I_j$ is described by a third differential equation, with a decay rate of $k_j$.

In its formulation, the GIF model extends the Leaky Integrate-and-Fire (LIF) model by integrating the effects of internal ionic currents into the first differential equation. The second differential equation elaborates on the dynamics of the firing threshold $\theta$, which is not constant. The first term of this equation accounts for the influence of the membrane potential on $\theta$, while the second term delineates the decay of $\theta$ towards its equilibrium value $\theta_{\infty}$ at a decay rate $\mathrm{b}$. Upon the firing of a neuron, the membrane potential $v_i$ and the threshold $\theta$ are reset, and the internal currents are adjusted according to specific rules. 

In the numerical simulation, we set parameters as $c = 1 \mathrm{~\mu F}\cdot \mathrm{~cm}^{-2}$, $g_L =0.05 \mathrm{~mS}\cdot\mathrm{~cm}^{-2}$,    $v_{L}= -70 \mathrm{~mV}$, $v_{reset}= -70 \mathrm{~mV}$, $\theta_{\infty}= -50 \mathrm{~mV}$, $\theta_{reset}= -60 \mathrm{~mV}$, $n=0$,  $r = 20$, $a = 0$, $b = 0.01$, $k_1= 0.2$, $k_2 = 0.02$. 

\noindent\textbf{Morris-Lecar Model.} The dynamics of the $i^{th}$ neuron of a Morris-Lecar network is governed by \cite{tsumoto2006bifurcations}
\begin{equation}
\begin{aligned}
 c \frac{d v_i}{d t}= & -g_{{\mathrm{Ca}}} m_i^{\infty}\left(v_i-v_{{\mathrm{Ca}}}\right)-g_K w_i\left(v_i-v_{\mathrm{K}}\right)\\
& -g_{\mathrm{L}}\left(v_i-v_{\mathrm{L}}\right)+I_i^{\text {input }} \\
\frac{d w_i}{d t}= & \frac{1}{\tau_i^w}\left(w_i^
{\infty}-w_i\right),
\end{aligned}
\end{equation}
where $c$ and $v_i$ denote the neuron's membrane capacitance and membrane potential, respectively.  $I_i^{\text {input }}$  is the injected current. The terms $v_{\mathrm{Ca}}, v_{\mathrm{K}}$, and $v_{\mathrm{L}}$ represent the reversal potentials for the calcium, potassium, and leak currents, respectively. Correspondingly, $g_{\mathrm{Ca}}, g_{\mathrm{K}}$, and $g_{\mathrm{L}}$ are the maximum conductances for these currents. The variable $w_i$ refers to the neuron's recovery variable, which is a normalized potassium conductance. The parameters $m_i^{\infty}$ and $w_i^{\infty}$ are the voltage-dependent equilibrium values for the normalized conductances of calcium and potassium, respectively, and are defined by
\begin{equation}
\begin{aligned}
& m_i^{\infty}=0.5\left(1+\tanh \left[\left(v_i-V_1\right) / V_2\right]\right) \\
& w_i^{\infty}=0.5\left(1+\tanh \left[\left(v_i-V_3\right) / V_4\right]\right),
\end{aligned}
\end{equation}
where $V_1, V_2, V_3$, and $V_4$ are the constant parameters. $\tau_i^w$ is a voltage-dependent time constant of $w_i$, defined by
\begin{equation}
\tau_i^w=\phi\left(\cosh \frac{v_i-V_3}{2 V_4}\right)^{-1}
\end{equation}
where $\phi$ is a temperature-dependent parameter, fixed as a constant in simulation. When the voltage reaches threshold $v_{th}$, the neuron emits a spike to all its postsynaptic neurons. The parameters are set in numerical simulations as 
$g_{\mathrm{Ca}}=4.4 \mathrm{~mS} \cdot \mathrm{~cm}^{-2}$, $v_{\mathrm{Ca}}=130 \mathrm{~mV}$, $g_{\mathrm{K}}=8 \mathrm{~mS} \cdot \mathrm{cm}^{-2}$, $v_{\mathrm{K}}=-84 \mathrm{~mV}$, $g_{\mathrm{L}}=2 \mathrm{~mS} \cdot \mathrm{cm}^{-2}$, $v_{\mathrm{L}}=-60 \mathrm{~mV}$,  $c=20 \mathrm{~\mu F} \cdot \mathrm{cm}^{-2}$, $V_1=-1.2 \mathrm{~mV}$, $V_2=18 \mathrm{~mV}$, $V_3=2 \mathrm{~mV}$, $V_4= 30 \mathrm{~mV}$, $\phi=0.04$, and $v_{\text {th}}=0 \mathrm{~mV}$.

\noindent \textbf{Wang-Buzsaki model.} The dynamics of the $i^{th}$ neuron of a Wang-Buzsaki network is governed by \cite{wang1996gamma}

\begin{equation}
c \frac{d v_i}{d t}=-\left(I_{N a}+I_K+I_L\right)+I_i^{\text {input }},
\end{equation}
where $c$ is the cell membrane capacitance; $v_i$ is the membrane potential (voltage);  $I_i^{\text {input }}$  is the injected current. The leak current $I_L= g_{\mathrm{L}}\left(v_i-v_{\mathrm{L}}\right)$,  and the transient sodium current
\begin{equation}
I_{\mathrm{Na}}=g_{\mathrm{Na}} m_{i,\infty}^3 h\left(v_i-v_{\mathrm{Na}}\right),
\end{equation}
where the activation variable $m_i$ is assumed fast and substituted by its steady-state function $m_{i,\infty}=\alpha_m/\left(\alpha_m+\beta_m\right)$. Additionally, the inactivation variable $h_i$ follows first-order kinetics:
\begin{equation}
\frac{d h_i}{d t}=\phi\left(\alpha_h(1-h_i)-\beta_h h_i\right). 
\end{equation}
The delayed rectifier potassium current
\begin{equation}
    I_\mathrm{K}=g_\mathrm{k} n_i^4\left(v_i-v_\mathrm{K}\right),
\end{equation}
where the activation variable $n$ obeys the following equation:
 \begin{equation}
\frac{d n_i}{d t}=\phi\left(\alpha_n(1-n_i)-\beta_n n_i\right).
\end{equation}
The $m_i, h_i$, and $n_i$ are gating variables; $v_{\mathrm{Na}}, v_{\mathrm{K}}$, and $v_L$ are the reversal potentials for the sodium, potassium, and leak currents, respectively. Meanwhile, $g_{\mathrm{Na}}$, $g_{\mathrm{K}}$, and $g_L$ correspond to the maximum conductances for these currents.The constant $\phi$ serves as a temperature regulation factor. The rate variables $\alpha_z$ and $\beta_z$ ($z = m,h,n$) are defined as:
\begin{equation}
\begin{array}{ll}
 \alpha_m(v)=-\dfrac{0.1(v+35)}{\exp (-0.1(v+35))-1}, & \beta_m(v)=4 \exp (-\dfrac{v+60}{18}),\\[10pt]
\alpha_h(v)=0.07 \exp (-\dfrac{v+58}{20}), & \beta_h(v)=1 /(\exp (-0.1(v+28))+1), \\[10pt]
\alpha_n(v)= \dfrac{-0.01(v+34)}{\exp (-0.1(v+34))-1}, & \beta_n(v)=0.125 \exp (-\dfrac{v+44}{80}) .
\end{array}
\end{equation}

We take the parameters as in Ref. \cite{wang1996gamma} that $c=1  \mathrm{~\mu F} \cdot \mathrm{~cm}^{-2}$, $v_{\mathrm{Na}}=55 \mathrm{~mV}$, $v_{\mathrm{K}}=-90 \mathrm{~mV}$, $v_L=-65 \mathrm{~mV}$, $g_{\mathrm{Na}}= 35 \mathrm{~mS} \cdot \mathrm{cm}^{-2}$, $g_{\mathrm{K}}= 9 \mathrm{~mS} \cdot \mathrm{cm}^{-2}$,  $g_L=0.1 \mathrm{~mS} \cdot \mathrm{cm}^{-2}$ and $\phi =5$. When the voltage $v_i$ reaches the firing threshold, $v_{\mathrm{th}}= 20 \mathrm{~mV}$, we say the $i^{th}$ neuron generates a spike at this time.



\end{document}